\documentclass[12pt]{article}

\usepackage{times}
\usepackage[a4paper,text={128mm,185mm}, centering]{geometry}
\usepackage{changepage}
\usepackage[english]{babel}
\usepackage{fancyhdr}
\pagestyle{fancy}
\usepackage{amsmath}
\usepackage{amssymb}
\usepackage{amsthm}
\usepackage{amscd}
\usepackage{mathrsfs}
\usepackage[all,cmtip]{xy}
\xyoption{2cell}
\UseAllTwocells
\usepackage{enumerate}
\usepackage{makeidx}
\usepackage[applemac]{inputenc}
\usepackage{array}
\usepackage{multirow}
\usepackage{enumitem}
\usepackage{titlesec}

\titleformat{\section}[hang]%
{\bfseries\Large}{\thesection.}{1ex}{}%

\titleformat{\subsection}[hang]%
{\bfseries\large}{\thesubsection}{1ex}{}%

\newtheorem{theorem}			     {Theorem}	    [section]
\newtheorem{proposition}  [theorem]	 {Proposition}	
\newtheorem{corollary}	  [theorem]	 {Corollary}	
		
\theoremstyle{definition}

\newtheorem{remark} 	  [theorem]  {Remark}
\newtheorem{example}	  [theorem]  {Example}


\newcommand{\C}{\mathcal{C}}
\newcommand{\CC}{\mathbb{C}}
\newcommand{\VV}{\mathbb{V}}

\newcommand{\Mal}{\mathsf{Mal}}
\newcommand{\Perm}{\mathsf{Perm}}
\newcommand{\Maj}{\mathsf{Maj}}
\newcommand{\Ari}{\mathsf{Ari}}
\newcommand{\Cub}{\mathsf{Cube}}
\newcommand{\Edg}{\mathsf{Edge}}
\newcommand{\lex}{\mathsf{lex}}
\newcommand{\reg}{\mathsf{reg}}
\newcommand{\alg}{\mathsf{alg}}
\newcommand{\essalg}{\mathsf{ess\, alg}}
\newcommand{\regessalg}{\mathsf{reg\, ess\, alg}}
\newcommand{\Mod}{\mathsf{Mod}}
\newcommand{\Def}{\mathsf{Def}}
\newcommand{\Set}{\mathbf{Set}}
\newcommand{\Cat}{\mathbf{Cat}}
\newcommand{\op}{\mathsf{op}}
\newcommand{\pb}[1][dr]{\save*!/#1-1.5pc/#1:(-1,1)@^{|-}\restore}


\lhead{\sc\bfseries M. Hoefnagel and P.-A. Jacqmin}
\rhead{\sc\bfseries $M\Rightarrow\Mal$}

\title{\vskip 5pt  \bf  WHEN A MATRIX CONDITION IMPLIES THE MAL'TSEV PROPERTY}
\author{\itshape\bfseries {Michael Hoefnagel and Pierre-Alain Jacqmin}}
\date{}

\begin{document}

\maketitle

\cfoot{}
\thispagestyle{empty}
\vskip 25pt

\begin{adjustwidth}{0.5cm}{0.5cm}
{\small
{\bf R\'esum\'e.} Les conditions matricielles \'etendent les conditions de Mal'tsev lin\'eaires de l'alg\`ebre universelle aux propri\'et\'es d'exactitude en th\'eorie des cat\'egories. Certaines peuvent \^etre \'enonc\'ees dans le contexte finiment complet alors que, en g\'en\'eral, elles peuvent \^etre \'enonc\'ees seulement pour les cat\'egories r\'eguli\`eres. Nous \'etudions quand une telle condition matricielle implique la propri\'et\'e de Mal'tsev. Nos r\'esultats principaux affirment que, pour les deux types de matrices, cette implication est \'equivalente \`a l'implication correspondante restreinte au contexte des vari\'et\'es d'alg\`ebres universelles.\\
{\bf Abstract.} Matrix conditions extend linear Mal'tsev conditions from Universal Algebra to exactness properties in Category Theory. Some can be stated in the finitely complete context while, in general, they can only be stated for regular categories. We study when such a matrix condition implies the Mal'tsev property. Our main results assert that, for both types of matrices, this implication is equivalent to the corresponding implication restricted to the context of varieties of universal algebras.\\
{\bf Keywords.} Mal'tsev category, Mal'tsev condition, matrix property, cube term, finitely complete category, regular category, essentially algebraic category.\\
{\bf Mathematics Subject Classification (2020).} 18E13, 08B05, 03C05, 18-08 (primary); 18A35, 18E08, 18C05, 08A55, 08C10 (secondary).
}
\end{adjustwidth}

\section*{Introduction}

Given a \emph{simple extended matrix of variables}
$$M=\left[ \begin{array}{ccc|c}
x_{11} & \cdots & x_{1m} & y_{1}\\
\vdots &  & \vdots & \vdots\\
x_{n1} & \cdots & x_{nm} & y_{n}\end{array} \right]$$
where the $x_{ij}$'s and the $y_i$'s are (not necessarily distinct) variables from $\{x_1,\dots,x_k\}$, one can associate the following linear Mal'tsev condition (in the sense of~\cite{Snow}) on a variety of universal algebras~$\VV$: the algebraic theory of $\VV$ admits an $m$-ary term $p$ such that, for each $i\in\{1,\dots,n\}$, the equation $$p(x_{i1},\dots,x_{im}) = y_i$$ holds in~$\VV$. As shown in~\cite{Janelidze1}, this Mal'tsev condition is equivalent to the condition that, for each homomorphic $n$-ary relation $R\subseteq A^n$ on an algebra $A$ of~$\VV$, given any function $f\colon\{x_1,\dots,x_k\}\to A$ interpreting the variables in~$A$, the implication
$$\left\{\left[\begin{array}{c}f(x_{11}) \\ \vdots\\ f(x_{n1})\end{array}\right],\dots,\left[\begin{array}{c}f(x_{1m}) \\ \vdots\\ f(x_{nm})\end{array}\right]\right\}\subseteq R \Longrightarrow \left[\begin{array}{c}f(y_{1}) \\ \vdots\\ f(y_{n})\end{array}\right] \in R$$
holds. While the above linear Mal'tsev condition does not make sense in an arbitrary category, the above condition on relations can be stated in any finitely complete category $\CC$ using internal relations and generalized elements. If this condition is satisfied, we say that $\CC$ \emph{has $M$-closed relations}.

One of the most famous examples of such a condition is given by the matrix
$$\Mal = \left[ \begin{array}{ccc|c}
x_1 & x_2 & x_2 & x_1\\
x_1 & x_1 & x_2 & x_2 \end{array} \right].$$
A finitely complete category has $\Mal$-closed relations if and only if it is a Mal'tsev category~\cite{CPP}, i.e., if and only if every binary internal relation is difunctional in the sense of~\cite{Riguet}. A variety $\VV$ has $\Mal$-closed relations if and only if its theory admits a ternary operation $p$ satisfying the axioms $p(x_1,x_2,x_2)=x_1$ and $p(x_1,x_1,x_2)=x_2$, i.e., if and only if the composition of congruences on any algebra in $\VV$ is commutative~\cite{Maltsev}.

In~\cite{HJJ}, we have described an algorithm to decide whether one matrix condition implies another one in the finitely complete context, i.e., given two simple extended matrices $M_1$ and~$M_2$, whether each finitely complete category with $M_1$-closed relations has $M_2$-closed relations, which we denote by $M_1\Rightarrow_{\lex} M_2$. We have also shown that this algorithm cannot be used in the varietal context. That is, the statement $M_1\Rightarrow_{\lex} M_2$ is in general stronger than the statement that any variety with $M_1$-closed relations has $M_2$-closed relations, which we abbreviate as $M_1\Rightarrow_{\alg} M_2$. Moreover, a general algorithm to decide $M_1\Rightarrow_{\alg} M_2$ still does not exist. However, the results of~\cite{Oprsal} can be used to extract an algorithm for some matrices~$M_2$, including the Mal'tsev matrix~$\Mal$. Surprisingly, in the case $M_2=\Mal$, this algorithm reduces to the algorithm from~\cite{HJJ} for $M_1\Rightarrow_{\lex} \Mal$. This thus means that $M_1\Rightarrow_{\lex} \Mal$ is equivalent to $M_1\Rightarrow_{\alg} \Mal$, which is quite particular to the Mal'tsev matrix~$\Mal$. In that case, the algorithm to decide whether $M_1\Rightarrow_{\lex} \Mal$ reduces to find two (not necessarily distinct) rows of $M_1$ such that, when reducing $M_1$ to those two rows, its right column cannot be found among its left columns. The number of operations required by this algorithm is bounded by a polynomial in the numbers of rows and of columns of the matrix~$M_1$. In addition, using this algorithm and the results of~\cite{HJJ}, we can show that, given a finite number of simple extended matrices $M_1,\dots,M_d$, if each finitely complete category with $M_i$-closed relations for all $i\in\{1,\dots,d\}$ is a Mal'tsev category, then there exists $i\in\{1,\dots,d\}$ such that $M_i\Rightarrow_{\lex}\Mal$.

The linear Mal'tsev conditions arising from simple extended matrices only have equations of the form $$p(x_1,\dots,x_m)=y$$ but not of the form $$p(x_1,\dots,x_m)=p'(x'_1,\dots,x'_{m'})$$ for (not necessarily distinct) variables $x_1,\dots,x_m,x'_1,\dots,x'_{m'}$ and~$y$. In order to take this second kind of equation into account, one needs to consider (not necessarily simple) \emph{extended matrices of variables}
$$M=\left[ \begin{array}{ccc|ccc}
x_{11} & \cdots & x_{1m} & y_{11} & \cdots & y_{1m'}\\
\vdots &  & \vdots & \vdots && \vdots\\
x_{n1} & \cdots & x_{nm} & y_{n1} & \cdots & y_{nm'} \end{array} \right]$$
as introduced in~\cite{Janelidze5}, where the $x_{ij}$'s are variables from $\{x_1,\dots,x_l\}$ and the $y_{ij}$'s are variables from $\{x_1,\dots,x_l,\dots,x_k\}$ (where $k\geqslant l$). The linear Mal'tsev condition on a variety $\VV$ associated to such an $M$ is: the algebraic theory of $\VV$ contains $m$-ary terms $p_1,\dots,p_{m'}$ and $l$-ary terms $q_1,\dots,q_{k-l}$ such that, for each $i\in\{1,\dots,n\}$ and each $j\in\{1,\dots,m'\}$,
$$p_j(x_{i1},\dots,x_{im}) = \begin{cases} x_a \quad &\text{if }\,y_{ij}=x_a\in\{x_1,\dots,x_l\}\\ q_{a-l}(x_1,\dots,x_l) \quad &\text{if }\,y_{ij}=x_a\in\{x_{l+1},\dots,x_k\}\end{cases}$$
is an equation in the variables $x_1,\dots,x_l$ that holds in the algebraic theory of~$\VV$. As shown in~\cite{Janelidze5}, this is equivalent to the condition that, for any homomorphic $n$-ary relation $R\subseteq A^n$ on an algebra $A$ of~$\VV$, given any function $f\colon\{x_1,\dots,x_l\}\to A$, the implication
\begin{align*}
&\left\{\left[\begin{array}{c}f(x_{11}) \\ \vdots\\ f(x_{n1})\end{array}\right],\dots,\left[\begin{array}{c}f(x_{1m}) \\ \vdots\\ f(x_{nm})\end{array}\right]\right\}\subseteq R\\ \Longrightarrow \,&\exists f(x_{l+1}),\dots,f(x_k)\in A\,|\, \left\{\left[\begin{array}{c}f(y_{11}) \\ \vdots\\ f(y_{n1})\end{array}\right],\dots,\left[\begin{array}{c}f(y_{1m'}) \\ \vdots\\ f(y_{nm'})\end{array}\right]\right\}\subseteq R
\end{align*}
holds. In view of the existential quantifier in the above formula, a natural categorical context in which to extend this condition is the context of regular categories in the sense of~\cite{BGV}. In addition to the examples of matrix properties mentioned above, one has now also the example of $n$-permutable categories~\cite{CKP}. The exactness properties on a regular category being expressible by finite conjunctions of such matrix conditions have been semantically characterized in~\cite{JJ2}.

Given two such extended matrices $M_1$ and~$M_2$, a general algorithm to decide whether each regular category with $M_1$-closed relations has $M_2$-closed relations, denoted as $M_1\Rightarrow_{\reg} M_2$, is yet to be found. However, using the embedding theorems from~\cite{JacqminPhD,Jacqmin3}, the statement $M_1\Rightarrow_{\reg} M_2$ is equivalent (assuming the \emph{axiom of universes}~\cite{AGV}) to the statement, denoted by $M_1\Rightarrow_{\regessalg} M_2$, that any regular essentially algebraic category (in the sense of~\cite{AHR,AR}) with $M_1$-closed relations has $M_2$-closed relations. Using this equivalence and the results from~\cite{Oprsal}, we could prove that, when $M_2=\Mal$, the statement $M_1\Rightarrow_{\reg} \Mal$ is equivalent to the statement $M_1\Rightarrow_{\alg} \Mal$.

Our two main theorems, the first one stating the equivalence of $M_1\Rightarrow_{\lex} \Mal$ and $M_1\Rightarrow_{\alg} \Mal$ for a simple extended matrix $M_1$ and the second one stating the equivalence of $M_1\Rightarrow_{\reg} \Mal$ and $M_1\Rightarrow_{\alg} \Mal$ for a (general) extended matrix $M_1$ are quite surprising and particular to the Mal'tsev case. Indeed, as it is the general philosophy of the papers \cite{JacqminPhD,Jacqmin1,Jacqmin2,Jacqmin3,JJ,JJ2,JR}, to prove the validity of many statements about exactness properties, one is often required to produce a proof in the essentially algebraic context (and not just in the varietal context as it is the case in the present situation). Actually, we prove these two theorems not only for the Mal'tsev matrix $\Mal$, but for the matrix $\Cub_n$ for each $n\geqslant 2$, describing the Mal'tsev condition of having an $n$-cube term~\cite{BIMMVW}. The Mal'tsev case is then recovered in the case $n=2$.

This paper is organized as follows. In Section~\ref{section preliminaries}, we recall the necessary material from other papers. In particular, we explain the theory of matrix conditions, in the finitely complete, regular and varietal contexts. We also recall the algorithm from~\cite{HJJ} to decide for an implication $M_1\Rightarrow_{\lex} M_2$ in the finitely complete context and conclude the section with a reminder on essentially algebraic theories. Section~\ref{section main results} contains the main new results of the paper and is divided in two parts. In the first one, we prove Theorem~\ref{theorem simple matrix} which states that given a simple extended matrix $M$ and an integer $n\geqslant 2$, the statement $M\Rightarrow_{\lex}\Cub_n$ is equivalent to $M\Rightarrow_{\alg}\Cub_n$. We also obtain an easy algorithm to decide when these conditions hold. From this algorithm, we deduce (Theorem~\ref{theorem intersection}) that a finite conjunction of conditions induced by simple extended matrices implies the Mal'tsev property if and only if one of these matrix conditions alone already implies the Mal'tsev property. The second part of Section~\ref{section main results} deals with (general) extended matrices and we prove that for such a matrix $M$ and an integer $n\geqslant 2$, the statement $M\Rightarrow_{\reg}\Cub_n$ is equivalent to $M\Rightarrow_{\alg}\Cub_n$ (see Theorem~\ref{theorem general matrix}).

\section*{Acknowledgments}

The authors would like to warmly thank Jakub Opr\v{s}al for fruitful discussions on the subject, without which this paper would not exist. The first author would like to thank the NGA(MaSS) for its financial support. The second author is also grateful to the FNRS for its generous support.

\section{Preliminaries}\label{section preliminaries}

By a \emph{variety}, we mean a one-sorted finitary variety of universal algebras. By a \emph{regular} category, we mean a regular category in the sense of~\cite{BGV}, i.e., a finitely complete category with coequalizers of kernel pairs and pullback stable regular epimorphisms. Regular categories have been introduced as a context where finite limits and regular epimorphisms behave in a similar way as finite limits and surjections behave in the category of sets. In particular, every variety is a regular category. By a \emph{pointed} category, we mean a category with a \emph{zero object}, i.e., an object which is both terminal and initial. A variety is pointed if and only if its algebraic theory contains a unique constant term.

\subsection*{Matrix conditions}

Let us start by recalling the theory of matrix conditions on finitely complete and regular categories as introduced in~\cite{Janelidze1,Janelidze2,Janelidze5}. An \emph{extended matrix $M$ of variables} (or simply an \emph{extended matrix} for short) is given by integer parameters $n \geqslant 1$, $m\geqslant 0$, $m' \geqslant 0$ and $k\geqslant l \geqslant 0$ and by a $n \times (m+m')$ matrix
\begin{equation}\label{equ matrix}
\left[ \begin{array}{ccc|ccc}
x_{11} & \cdots & x_{1m} & y_{11} & \cdots & y_{1m'}\\
\vdots &  & \vdots & \vdots && \vdots\\
x_{n1} & \cdots & x_{nm} & y_{n1} & \cdots & y_{nm'} \end{array} \right]
\end{equation}
where the $x_{ij}$'s are (not necessarily distinct) variables from $\{x_1,\dots,x_l\}$ and the $y_{ij}$'s are (not necessarily distinct) variables from $\{x_1,\dots,x_l,\dots,x_k\}$. When the parameters $n,m,m',l,k$ are clear from the context, we will omit them and we will represent an extended matrix $M$ just by its matrix part; this will be the case when the conditions $m+m'>0$,
$$\{x_{ij}\,|\,i\in\{1,\dots,n\},j\in\{1,\dots,m\}\} = \{x_1,\dots,x_l\}$$
and
$$\{x_1,\dots,x_l\} \cup \{y_{ij}\,|\,i\in\{1,\dots,n\},j\in\{1,\dots,m'\}\} = \{x_1,\dots,x_k\}$$
are all satisfied. The first $m$ columns of $M$ will be called its \emph{left columns}, while its last $m'$ columns will be called its \emph{right columns}. Given an object $A$ in a finitely complete category~$\CC$, each variable $x$ in $\{x_1,\dots,x_l\}$ gives rise to the corresponding projection $x^A\colon A^l\to A$ from the $l$-th power of~$A$ (and similarly, each variable $x$ in $\{x_1,\dots,x_k\}$ gives rise to the corresponding projection $x^A\colon A^k\to A$). Given such an extended matrix~$M$, an $n$-ary internal relation $r\colon R \rightarrowtail A^n$ in a regular category~$\CC$ is said to be \emph{$M$-closed} if, when we consider the pullbacks
$$\xymatrix{P \pb \ar[rrr]^-{f'} \ar@{}[dd]|(.13)*{}="A" \ar@{>->} "A";[dd]_-{f} &&& R^m \ar@{}[dd]|(.13)*{}="B" \ar@<-3pt>@{>->} "B";[dd]^-{r^m} \\ \\
A^l \ar[rrr]_-{\left[\begin{smallmatrix}
x_{11}^A & \cdots & x_{1m}^A\\
\vphantom{\int\limits^x}\smash{\vdots} &  & \vphantom{\int\limits^x}\smash{\vdots} \\
x_{n1}^A & \cdots & x_{nm}^A
\end{smallmatrix} \right]} &&& (A^n)^m} \quad\qquad
\xymatrix{Q \pb \ar[rrr]^-{g'} \ar@{}[dd]|(.13)*{}="A" \ar@{>->} "A";[dd]_-{g} &&& R^{m'} \ar@{}[dd]|(.13)*{}="B" \ar@<-3pt>@{>->} "B";[dd]^-{r^{m'}} \\ \\
A^k \ar[rrr]_-{\left[\begin{smallmatrix}
y_{11}^A & \cdots & y_{1m'}^A\\
\vphantom{\int\limits^x}\smash{\vdots} &  & \vphantom{\int\limits^x}\smash{\vdots} \\
y_{n1}^A & \cdots & y_{nm'}^A
\end{smallmatrix} \right]} &&& (A^n)^{m'}}$$
and
$$\xymatrix{T \ar@{}[r]|(.13)*{}="A" \ar@{>->} "A";[r]^-{h'} \ar[dd]_-{h} \pb & Q \ar@{}[d]|(.25)*{}="C" \ar@{>->} "C";[d]^(.25){g} \\
& A^k \cong A^l \times A^{k-l}  \ar[d]^-{\pi_1 = (x_1^A,\dots,x_l^A)} \\
P \ar@{}[r]|(.13)*{}="B" \ar@{>->} "B";[r]_-{f} & A^l}$$
then $h$ is a regular epimorphism (or, in other words, $f$ factors through the image of $\pi_1 g$). We say that the regular category $\CC$ \emph{has $M$-closed relations} if any internal $n$-ary relation $r\colon R \rightarrowtail A^n$ in $\CC$ is $M$-closed. If $\CC=\VV$ is a variety, an internal relation is a homomorphic relation. An $n$-ary homomorphic relation $R\subseteq A^n$ on an algebra $A$ is $M$-closed when, for each function $f\colon\{x_1,\dots,x_l\}\to A$ such that
$$\left\{\left[\begin{array}{c}f(x_{11}) \\ \vdots\\ f(x_{n1})\end{array}\right],\dots,\left[\begin{array}{c}f(x_{1m}) \\ \vdots\\ f(x_{nm})\end{array}\right]\right\}\subseteq R,$$
there exists an extension $g\colon\{x_1,\dots,x_k\}\to A$ of~$f$ (i.e., $g(x_i)=f(x_i)$ for each $i\in\{1,\dots,l\}$) such that
$$\left\{\left[\begin{array}{c}g(y_{11}) \\ \vdots\\ g(y_{n1})\end{array}\right],\dots,\left[\begin{array}{c}g(y_{1m'}) \\ \vdots\\ g(y_{nm'})\end{array}\right]\right\}\subseteq R.$$
This description can be used to prove the following theorem characterizing varieties with $M$-closed relations via a linear Mal'tsev condition.

\begin{theorem}[\cite{Janelidze5}]\label{theorem characterization varieties M-closed relations}
Let $M$ be an extended matrix as in~(\ref{equ matrix}). A variety $\VV$ has $M$-closed relations if and only if the algebraic theory of $\VV$ contains $m$-ary terms $p_1,\dots,p_{m'}$ and $l$-ary terms $q_1,\dots,q_{k-l}$ such that, for each $i\in\{1,\dots,n\}$ and each $j\in\{1,\dots,m'\}$,
$$p_j(x_{i1},\dots,x_{im}) = \begin{cases} x_a \quad &\text{if }\,y_{ij}=x_a\in\{x_1,\dots,x_l\}\\ q_{a-l}(x_1,\dots,x_l) \quad &\text{if }\,y_{ij}=x_a\in\{x_{l+1},\dots,x_k\}\end{cases}$$
is a theorem of the algebraic theory of $\VV$ in the variables $x_1,\dots,x_l$.
\end{theorem}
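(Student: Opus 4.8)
The plan is to prove both implications by testing the $M$-closedness condition on, respectively, an arbitrary homomorphic relation (for the sufficiency of the terms) and a single cleverly chosen generic relation on a free algebra (for their necessity). Throughout, I would use the explicit description of $M$-closedness of a homomorphic relation $R\subseteq A^n$ recalled just before the statement.

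First, for the ``if'' direction, suppose the terms $p_1,\dots,p_{m'}$ and $q_1,\dots,q_{k-l}$ exist. Given any homomorphic relation $R\subseteq A^n$ on an algebra $A$ of $\VV$ and any $f\colon\{x_1,\dots,x_l\}\to A$ whose $m$ left columns lie in $R$, I would define the extension $g$ by setting $g(x_a)=f(x_a)$ for $a\leqslant l$ and $g(x_a)=q_{a-l}(f(x_1),\dots,f(x_l))$ for $l<a\leqslant k$. The verification is then a direct computation: since $R$ is a subalgebra of $A^n$ containing the left columns $(f(x_{1j}),\dots,f(x_{nj}))$, applying the $m$-ary operation $p_j$ to these columns yields an element of $R$ whose $i$-th coordinate is $p_j(f(x_{i1}),\dots,f(x_{im}))$. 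Evaluating the equation satisfied by $p_j$ under the interpretation $x_b\mapsto f(x_b)$ shows that this coordinate equals $g(y_{ij})$ in both cases of the case distinction (it is $f(x_a)=g(x_a)$ when $a\leqslant l$, and $q_{a-l}(f(x_1),\dots,f(x_l))=g(x_a)$ when $a>l$). Hence the $j$-th right column of $g$ lies in $R$ for every $j$, so $R$ is $M$-closed; as $R$ was arbitrary, $\VV$ has $M$-closed relations.

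For the ``only if'' direction, which carries the actual content, I would apply $M$-closedness to the free algebra $A=F_\VV(x_1,\dots,x_l)$ on $l$ generators, taking $f$ to be the inclusion of generators and $R\subseteq A^n$ the subalgebra generated by the $m$ left columns $(x_{1j},\dots,x_{nj})$, for $1\leqslant j\leqslant m$. By construction these columns lie in $R$, so $M$-closedness furnishes an extension $g\colon\{x_1,\dots,x_k\}\to A$ of $f$ whose right columns lie in $R$. For each $a>l$ the element $g(x_a)\in F_\VV(x_1,\dots,x_l)$ is represented by an $l$-ary term, which I would take as $q_{a-l}$. Since $R$ consists exactly of the tuples $(t(x_{11},\dots,x_{1m}),\dots,t(x_{n1},\dots,x_{nm}))$ as $t$ ranges over $m$-ary terms, the membership of the $j$-th right column in $R$ yields an $m$-ary term $p_j$ with $g(y_{ij})=p_j(x_{i1},\dots,x_{im})$ for all $i$.

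Finally I would read off the required equations. The crucial point---and the step I expect to be the main obstacle to formulate cleanly---is the standard translation that an identity holds in the free algebra $F_\VV(x_1,\dots,x_l)$ if and only if it is a theorem of the algebraic theory in the variables $x_1,\dots,x_l$. Splitting according to whether $y_{ij}=x_a$ has $a\leqslant l$, in which case $g(y_{ij})=g(x_a)=x_a$, or $a>l$, in which case $g(y_{ij})=g(x_a)=q_{a-l}(x_1,\dots,x_l)$, turns the identities $p_j(x_{i1},\dots,x_{im})=g(y_{ij})$ valid in $F_\VV(x_1,\dots,x_l)$ into precisely the asserted theorems, which completes the proof.
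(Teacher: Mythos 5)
Your proposal is correct. The paper does not actually prove this theorem itself---it is quoted from~\cite{Janelidze5}---but your argument (the direct componentwise subalgebra computation for the ``if'' direction, and instantiation at the free algebra $F_\VV(x_1,\dots,x_l)$ with $R$ the subalgebra of $A^n$ generated by the left columns for the ``only if'' direction) is precisely the standard one that the paper's preceding remark (``This description can be used to prove the following theorem'') alludes to, and every step, including the identification of the generated subalgebra with the set of images of $m$-ary terms and the translation between identities holding in $F_\VV(x_1,\dots,x_l)$ and theorems of the algebraic theory in the variables $x_1,\dots,x_l$, is sound.
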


For such a matrix~$M$, we will denote by $\VV_M$ the variety whose basic operations are the $m$-ary terms $p_1,\dots,p_{m'}$ and the $l$-ary terms $q_1,\dots,q_{k-l}$ and whose axioms are the theorems described in Theorem~\ref{theorem characterization varieties M-closed relations}. Obviously, $\VV_M$ has $M$-closed relations.

\subsection*{Simple matrix conditions}

An extended matrix $M$ as above will be said to be \emph{simple} when $k=l$ and $m'=1$. We can display such a matrix $M$ as
\begin{equation}\label{equ simple matrix}
\left[ \begin{array}{ccc|c}
x_{11} & \cdots & x_{1m} & y_{1}\\
\vdots &  & \vdots & \vdots\\
x_{n1} & \cdots & x_{nm} & y_{n}\end{array} \right]
\end{equation}
where the $x_{ij}$'s and the $y_i$'s are variables from $\{x_1,\dots,x_k\}$. In that case, the notion of $n$-ary $M$-closed relations can be extended to the finitely complete context as follows. An $n$-ary internal relation $r\colon R\rightarrowtail A^n$ in a finitely complete category $\CC$ is said to be \emph{$M$-closed} when, given any object $B$ and any function $f\colon\{x_1,\dots,x_k\}\to\CC(B,A)$ such that the induced morphism
$$\left[\begin{array}{c}f(x_{1j}) \\ \vdots\\ f(x_{nj})\end{array}\right]\colon B\to A^n$$
factor through $r$ for each $j\in\{1,\dots,m\}$, then so does the morphism
$$\left[\begin{array}{c}f(y_{1}) \\ \vdots\\ f(y_{n})\end{array}\right]\colon B\to A^n.$$
For a simple extended matrix~$M$, we say that the finitely complete category $\CC$ has \emph{$M$-closed relations} when each internal $n$-ary relation $r\colon R\rightarrowtail A^n$ is $M$-closed. If $\CC=\VV$ is a variety, a homomorphic relation $R\subseteq A^n$ is $M$-closed when, for each function $f\colon\{x_1,\dots,x_k\}\to A$, the implication
$$\left\{\left[\begin{array}{c}f(x_{11}) \\ \vdots\\ f(x_{n1})\end{array}\right],\dots,\left[\begin{array}{c}f(x_{1m}) \\ \vdots\\ f(x_{nm})\end{array}\right]\right\}\subseteq R \Longrightarrow \left[\begin{array}{c}f(y_{1}) \\ \vdots\\ f(y_{n})\end{array}\right] \in R$$
holds. Particularizing Theorem~\ref{theorem characterization varieties M-closed relations} to this simpler situation, one gets the following.

\begin{theorem}[\cite{Janelidze1}]\label{theorem characterization varieties simple M-closed relations}
Let $M$ be a simple extended matrix as in~(\ref{equ simple matrix}). A variety $\VV$ has $M$-closed relations if and only if the algebraic theory of $\VV$ contains an $m$-ary term $p$ such that, for each $i\in\{1,\dots,n\}$,
$$p(x_{i1},\dots,x_{im}) = y_i$$
is a theorem of the algebraic theory of $\VV$ in the variables $x_1,\dots,x_k$.
\end{theorem}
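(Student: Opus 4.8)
The plan is to obtain this statement as a direct specialization of Theorem~\ref{theorem characterization varieties M-closed relations}, reading the simple matrix~(\ref{equ simple matrix}) as the extended matrix~(\ref{equ matrix}) in which $k=l$ and $m'=1$. First I would check that, for such a matrix, the varietal notion of $M$-closed relation recalled just above for simple matrices coincides with the one appearing in Theorem~\ref{theorem characterization varieties M-closed relations}. Since $k=l$, any function $f\colon\{x_1,\dots,x_l\}\to A$ is already defined on all of $\{x_1,\dots,x_k\}$, so the only extension $g$ of $f$ is $f$ itself and the existential quantifier in the general definition degenerates. As $m'=1$, the family of right columns reduces to the single column $[f(y_{11}),\dots,f(y_{n1})]^{\mathsf{T}}=[f(y_1),\dots,f(y_n)]^{\mathsf{T}}$. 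Hence the general implication collapses exactly to the implication displayed above for simple matrices, and the two notions agree.

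Next I would specialize the term condition. In Theorem~\ref{theorem characterization varieties M-closed relations} one asks for $m$-ary terms $p_1,\dots,p_{m'}$ and $l$-ary terms $q_1,\dots,q_{k-l}$. With $m'=1$ there is a single $m$-ary term, which I rename by writing $p=p_1$, and with $k=l$ the list $q_1,\dots,q_{k-l}$ is empty. Moreover, since $\{x_{l+1},\dots,x_k\}=\varnothing$, every right-hand variable $y_{i1}=y_i$ lies in $\{x_1,\dots,x_l\}$, so only the first branch of the piecewise equation can occur. The required identities therefore reduce, for $j=1$ and each $i\in\{1,\dots,n\}$, to $p(x_{i1},\dots,x_{im})=y_i$, which is precisely the term condition in the statement.

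Putting these two observations together, Theorem~\ref{theorem characterization varieties M-closed relations} applied to the matrix~(\ref{equ simple matrix}), viewed with $k=l$ and $m'=1$, yields the desired equivalence. The only point requiring genuine attention --- and the step I would treat most carefully --- is the first one: verifying that the separate definition of $M$-closedness given for simple matrices in the finitely complete (and hence varietal) setting really is the $k=l$, $m'=1$ instance of the general definition, i.e.\ that nothing is lost when the existential quantifier over $f(x_{l+1}),\dots,f(x_k)$ becomes vacuous. Everything else is routine bookkeeping of indices with no combinatorial or categorical content.
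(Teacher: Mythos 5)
Your proposal is correct and follows exactly the route the paper takes: the paper simply states that Theorem~\ref{theorem characterization varieties simple M-closed relations} is obtained by ``particularizing Theorem~\ref{theorem characterization varieties M-closed relations} to this simpler situation,'' i.e.\ setting $k=l$ and $m'=1$, which is precisely your specialization, including the observation that the existential quantifier over extensions becomes vacuous and the list $q_1,\dots,q_{k-l}$ is empty. Your write-up just makes explicit the bookkeeping the paper leaves implicit.
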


Before describing some examples of matrix conditions, let us introduce some notation. We denote by $\lex$ (respectively by $\lex_\ast$, $\reg$, $\reg_\ast$, $\alg$ and $\alg_\ast$) the collection of finitely complete categories (respectively of finitely complete pointed categories, regular categories, regular pointed categories, varieties and pointed varieties). The notation $\lex$ abbreviates `left exact categories' which is another name for finitely complete categories. Given two extended matrices $M_1$ and~$M_2$ and a sub-collection $\C$ of $\reg$ (respectively, two simple extended matrices $M_1$ and~$M_2$ and a sub-collection $\C$ of $\lex$), we write $M_1\Rightarrow_{\C} M_2$ to mean that any category in $\C$ with $M_1$-closed relations has $M_2$-closed relations. We write $M_1\Leftrightarrow_{\C} M_2$ for the conjunction of the statements $M_1\Rightarrow_{\C} M_2$ and $M_2\Rightarrow_{\C} M_1$. We also write $M_1\nRightarrow_{\C} M_2$ for the negation of the statement $M_1\Rightarrow_{\C} M_2$.

\subsection*{Examples}

\begin{example}\label{example Mal'tsev}
Let $\Mal$ be the simple extended matrix given by
$$\Mal = \left[ \begin{array}{ccc|c}
x_1 & x_2 & x_2 & x_1\\
x_1 & x_1 & x_2 & x_2 \end{array} \right].$$
A finitely complete category has $\Mal$-closed relations if and only if it is a Mal'tsev category as introduced in~\cite{CPP}. A variety has $\Mal$-closed relations if and only if its theory admits a Mal'tsev term, i.e., if and only if it is $2$-permutable~\cite{Maltsev}.
\end{example}

\begin{example}
More generally, for any $r\geqslant 2$, let $\Perm_r$ be the extended matrix given by
$$\Perm_r = \left[ \begin{array}{ccc|ccccc}
x_1 & x_2 & x_2 & x_1 & x_3 & x_4 & \cdots & x_r\\
x_1 & x_1 & x_2 & x_3 & x_4 & \cdots & x_r & x_2 \end{array} \right].$$
A regular category has $\Perm_r$-closed relations if and only if it is an $r$-per\-mut\-able category as introduced in~\cite{CKP}, generalizing the notion of an $r$-permutable variety.
\end{example}

\begin{example}
Let $\Ari$ be the simple extended matrix given by
$$\Ari = \left[ \begin{array}{ccc|c}
x_1 & x_2 & x_2 & x_1\\
x_1 & x_1 & x_2 & x_2\\
x_1 & x_2 & x_1 & x_1 \end{array} \right].$$
The notion of a finitely complete category with $\Ari$-closed relations extends to the finitely complete context the notions of an arithmetical category in the sense of~\cite{Pedicchio} and of an equivalence distributive Mal'tsev category in the sense of~\cite{GRT}. A variety $\VV$ has $\Ari$-closed relations if and only if its theory admits a Pixley term~\cite{Pixley}.
\end{example}

\begin{example}
Let $\Maj$ be the simple extended matrix given by
$$\Maj = \left[ \begin{array}{ccc|c}
x_1 & x_1 & x_2 & x_1\\
x_1 & x_2 & x_1 & x_1\\
x_2 & x_1 & x_1 & x_1 \end{array} \right].$$
A finitely complete category has $\Maj$-closed relations if and only if it is a majority category as introduced in~\cite{Hoefnagel}. A variety $\VV$ is a majority category if and only if its theory admits a majority term.
\end{example}

\begin{example}\label{example cube terms}
For any $n,k \geqslant 2$, let $\Cub_{n,k}$ be the simple extended matrix with $n$ rows, $k^n-1$ left columns, one right column and $k=l$ variables defined by taking as left columns (ordered lexicographically) all possible $n$-tuples of elements of $\{x_1,\dots,x_k\}$ except $(x_1,\dots,x_1)$, which is used as right column. As we shall see below (see Corollary~\ref{corollary cubical matrices}), for any $n,k_1,k_2\geqslant 2$, one has $\Cub_{n,k_1} \Leftrightarrow_{\lex} \Cub_{n,k_2}$. In view of this, we abbreviate $\Cub_{n,2}$ by $\Cub_n$. Up to permutation of left columns and change of variables, $\Cub_2$ is the matrix $\Mal$ from Example~\ref{example Mal'tsev}, and therefore, $\Cub_2 \Leftrightarrow_{\lex} \Mal$. The matrix $\Cub_3$ is the matrix
$$\Cub_3 = \left[ \begin{array}{ccccccc|c}
x_1 & x_1 & x_1 & x_2 & x_2 & x_2 & x_2 & x_1\\
x_1 & x_2 & x_2 & x_1 & x_1 & x_2 & x_2 & x_1\\
x_2 & x_1 & x_2 & x_1 & x_2 & x_1 & x_2 & x_1 \end{array} \right].$$
For $n\geqslant 2$, a variety has $\Cub_n$-closed relations if and only if its theory admits a $n$-cube term in the sense of~\cite{BIMMVW}.
\end{example}

\begin{example}\label{example edge terms}
For any $n\geqslant 2$, let $\Edg_n$ be the simple extended matrix with $n$ rows, $n+1$ left columns, one right column and $k=l=2$ variables defined by
$$\Edg_n = \left[\begin{array}{cccccc|c}
x_2 & x_2 & x_1 & x_1 & \cdots & x_1 & x_1\\
x_2 & x_1 & x_2 & x_1 & \cdots & x_1 & x_1\\
x_1 & x_1 & x_1 & x_2 & \cdots & x_1 & x_1\\
\vdots & \vdots & \vdots & \vdots & \ddots & \vdots & \vdots\\
x_1 & x_1 & x_1 & x_1 & \cdots & x_2 & x_1 
\end{array}\right]$$
where the entries in positions $(1,1)$, $(2,1)$ and $(i,i+1)$ for all $i\in\{1,\dots,n\}$ are $x_2$'s and the other ones are~$x_1$'s. Up to permutation of left columns and change of variables, $\Edg_2$ is the matrix $\Mal$, and thus $\Edg_2 \Leftrightarrow_{\lex} \Mal$. For a general $n\geqslant 2$, a variety has $\Edg_n$-closed relations if and only if its theory admits an $n$-edge term in the sense of~\cite{BIMMVW}. Therefore, as it is shown in~\cite{BIMMVW}, one has $\Cub_n\Leftrightarrow_{\alg}\Edg_n$. Using Proposition~1.7 of~\cite{Janelidze2}, we know that $\Edg_n \Rightarrow_{\lex} \Cub_n$. However, for a general~$n$, the converse is not true. For instance, from the computer-based results of~\cite{HJJ}, we can see that $\Cub_3\nRightarrow_{\lex}\Edg_3$. This is an additional example of the context dependency of the implications between matrix properties.
\end{example}

\subsection*{The algorithm in the finitely complete context}

Given two extended matrices $M_1$ and~$M_2$, as far as we know, algorithms to decide whether $M_1\Rightarrow_{\reg} M_2$ or whether $M_1\Rightarrow_{\alg} M_2$ do not exist yet. On the contrary, in~\cite{HJJ}, an algorithm to decide whether $M_1\Rightarrow_{\lex} M_2$ for two simple extended matrices $M_1$ and $M_2$ has been developed. Since we will need it further, let us recall it here.

A simple exended matrix $M$ is called \emph{trivial} if any finitely complete category with $M$-closed relations is a \emph{preorder} (i.e., a category with at most one morphism between any two objects). In order to state their characterization from~\cite{HJJ}, we need the following notation, using the presentation of $M$ as in~(\ref{equ simple matrix}). Given $i\in\{1,\dots,n\}$, $R_{M_i}$ denotes the equivalence relation on the set $\{1,\dots,m\}$ defined by $j_1 R_{M_i} j_2$ if and only if $x_{ij_1}=x_{ij_2}$. Given two equivalence relations $R$ and $S$ on the same set, $R\vee S$ denotes the smallest equivalence relation containing both $R$ and~$S$. Finally, we denote by $\Set^{\op}$ the dual of the category $\Set$ of sets.

\begin{theorem}[\cite{HJJ}]\label{theorem trivial matrices}
For a simple extended matrix $M$ as in~(\ref{equ simple matrix}), the following conditions are equivalent:
\begin{enumerate}[label=(\alph*), ref=(\alph*)]
\item $M$ is \emph{not} a trivial matrix.
\item $\Set^{\op}$ has $M$-closed relations.
\item For all $i,i'\in\{1,\dots,n\}$, there exist $j,j'\in\{1,\dots,m\}$ such that $x_{ij}=y_i$, $x_{i'j'}=y_{i'}$ and $j R_{M_i} \vee R_{M_{i'}} j'$.
\end{enumerate}
\end{theorem}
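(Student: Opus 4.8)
The plan is to establish the cycle of implications $(c)\Rightarrow(b)\Rightarrow(a)\Rightarrow(c)$. The implication $(b)\Rightarrow(a)$ is immediate: $\Set^{\op}$ is finitely complete (since $\Set$ is finitely cocomplete) and is visibly not a preorder, so if it has $M$-closed relations then it is a finitely complete category with $M$-closed relations that fails to be a preorder, whence $M$ is not trivial.

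The heart of the argument is a concrete description of $M$-closedness in $\Set^{\op}$. Dualizing, an $n$-ary internal relation on a set $A$ is a surjection $n\cdot A\twoheadrightarrow R$ in $\Set$, i.e. an equivalence relation $\sim$ on the coproduct $n\cdot A$; a generalized element assigns to each variable $x_t$ a map $f(x_t)\colon A\to B$; and a column factors through the relation exactly when the induced copairing $n\cdot A\to B$ is constant on $\sim$-classes. Passing to the generic choice of $(B,f)$, one finds that $\Set^{\op}$ has $M$-closed relations if and only if, for every set $A$, every equivalence $\sim$ on $n\cdot A$ and every pair $(\ell,a)\sim(\ell',a')$, the elements $(a,y_\ell)$ and $(a',y_{\ell'})$ become identified in the equivalence $\approx$ on $A\times\{x_1,\dots,x_k\}$ generated by $(a_1,x_{\ell_1 c})\approx(a_2,x_{\ell_2 c})$ for all columns $c$ and all $(\ell_1,a_1)\sim(\ell_2,a_2)$. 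By monotonicity in $\sim$ it suffices to treat the equivalence generated by a single pair, and the decisive case is $a\neq a'$, where the generators live on two disjoint ``layers'' and the problem becomes a bipartite reachability question.

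This motivates the key combinatorial lemma I would isolate: fixing rows $i,i'$, form the graph $G_{i,i'}$ on $\{x_1,\dots,x_k\}\times\{0,1\}$ with an edge between $(x_{ic},0)$ and $(x_{i'c},1)$ for each column $c$; then $(y_i,0)$ and $(y_{i'},1)$ lie in the same connected component of $G_{i,i'}$ if and only if condition $(c)$ holds for the pair $(i,i')$. One direction reads off the columns met along a connecting path (leaving $(y_i,0)$ forces some $j$ with $x_{ij}=y_i$, entering $(y_{i'},1)$ forces some $j'$ with $x_{i'j'}=y_{i'}$, and the intermediate returns within a layer are exactly $R_{M_i}$- and $R_{M_{i'}}$-steps, yielding $j\,(R_{M_i}\vee R_{M_{i'}})\,j'$); the converse builds the path from such a chain. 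Granting this lemma, $(c)\Rightarrow(b)$ follows: given $(\ell,a)\sim(\ell',a')$, I apply $(c)$ to $(\ell,\ell')$ to obtain $j,j'$ and a chain, and translate it into a zig-zag of the generators $(a,x_{\ell c})\approx(a',x_{\ell' c})$ connecting $(a,y_\ell)$ to $(a',y_{\ell'})$, an $R_{M_{\ell'}}$-step being a genuine equality at the layer $a'$ and an $R_{M_\ell}$-step a detour through the layer $a$.

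It remains to prove $(a)\Rightarrow(c)$, which I would handle by contraposition: assuming $(c)$ fails for some pair $(i,i')$, I must show that every finitely complete category $\CC$ with $M$-closed relations is a preorder. By the lemma, $(y_i,0)$ and $(y_{i'},1)$ lie in distinct components of $G_{i,i'}$, so there is a two-valued function on $\{x_1,\dots,x_k\}\times\{0,1\}$, constant on components, separating them; this is a combinatorial certificate that no relation ``forced'' by the left columns need contain the right column. Given two parallel morphisms $u,v\colon X\to Y$ in $\CC$, I would use this certificate, together with the projections of a suitable power of $X$ and the two maps $u,v$ (playing the roles of the two layers), to assemble an internal relation on a power of $X$, built as a joint equalizer, through which all left columns factor but the right column factors only if $u=v$; $M$-closedness then forces $u=v$, so $\CC$ is a preorder and $M$ is trivial. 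The main obstacle is precisely the explicit construction of this relation: one must convert the purely combinatorial separation coming from $\neg(c)$ into finite-limit data in an arbitrary $\CC$, choosing the equalizer maps so that the left columns factor tautologically while the right column records the equation $u=v$. Once this relation is in hand, the contradiction with $M$-closedness is immediate.
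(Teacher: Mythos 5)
The paper itself offers no proof of this statement---it is imported verbatim from~\cite{HJJ}---so I am measuring your argument against what a complete proof requires rather than against a proof in this text. Your analysis of (b) and (c) is correct and is essentially the intended one: dualizing, an $n$-ary relation on $A$ in $\Set^{\op}$ is an equivalence relation on $n\cdot A$, factorization of a column through it is constancy on classes, the generic choice of $(B,f)$ reduces $M$-closedness to the condition you state, monotonicity reduces to a single generating pair, and your two-layer graph lemma (connectivity of $(y_i,0)$ and $(y_{i'},1)$ in $G_{i,i'}$ if and only if condition (c) holds for $(i,i')$) is correctly stated and correctly verified by reading off shared vertices along a path; the case $a=a'$ is indeed subsumed since connectivity survives the quotient identifying the two layers. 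Together with the trivial (b)$\Rightarrow$(a), this gives (c)$\Leftrightarrow$(b) and (b)$\Rightarrow$(a) in full.

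The gap is in (a)$\Rightarrow$(c), and you name it yourself: ``the main obstacle is precisely the explicit construction of this relation.'' That construction is not a routine verification to be deferred---it is the substantive content of the theorem, namely that the single category $\Set^{\op}$ detects triviality for \emph{all} finitely complete categories. Two things are missing. First, the reduction from $n$-ary relations to the two offending rows $i,i'$ (pulling a binary relation back along $(\pi_i,\pi_{i'})\colon A^n\to A^2$) is not mentioned. Second, and more seriously, the passage from the separating $2$-colouring of $\pi_0(G_{i,i'})$ to finite-limit data in an arbitrary $\CC$ is genuinely delicate: the naive attempt---take the diagonal of $Y$ and colour the variables by components of the one-layer quotient of $G_{i,i'}$---fails, because that quotient can be connected even when $(y_i,0)$ and $(y_{i'},1)$ are separated in $G_{i,i'}$. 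For instance, the trivial matrix with single left column $\binom{x_2}{x_1}$ and right column $\binom{x_1}{x_2}$ has connected one-layer quotient, and every symmetric relation (in particular every diagonal) is automatically closed under it; a counterexample relation must be asymmetric, built from data such as $(\langle 1_X,u\rangle,\langle 1_X,v\rangle)\colon X\rightarrowtail (X\times Y)^2$ or from equalizers of \emph{distinct} coordinates of a power of $Y$ indexed so as to keep the two layers apart. Specifying that relation and that assignment of generalized elements in general, proving joint monicity, checking that every left column factors, and checking that factorization of the right column forces $u=v$, is exactly the work that remains; until it is supplied, the equivalence of (a) with (b) and (c) is not established.
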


Let $M_1$ and $M_2$ be two simple extended matrices with parameters $n_1$, $m_1$, $m'_1=1$, $k_1=l_1$ and $n_2$, $m_2$, $m'_2=1$, $k_2=l_2$. We know that if $m_1=0$, then $M_1$ is trivial, we always have $M_1\Rightarrow_{\lex}M_2$ and we have $M_2\Rightarrow_{\lex}M_1$ if and only if $m_2=0$. If $m_1>0$ and $M_1$ is trivial, then we have $M_1\Rightarrow_{\lex}M_2$ if and only if $m_2>0$ and we have $M_2\Rightarrow_{\lex}M_1$ if and only if $M_2$ is trivial. It thus remains to explain, in the case where neither $M_1$ nor $M_2$ is trivial, how to decide whether $M_1\Rightarrow_{\lex}M_2$. In order to describe this algorithm, we need the following notion. Given $i\in\{1,\dots,n_1\}$ and a set~$S$, an \emph{interpretation of type $S$} of the $i$-th row of $M_1$ is a $(m_1+1)$-tuple
$$\left[\begin{array}{ccc|c}f(x^1_{i1}) & \dots & f(x^1_{im_1}) & f(y^1_i) \end{array}\right]$$
formed by applying a function $f\colon\{x_1,\dots,x_{k_1}\}\to S$ to the entries of the $i$-th row of~$M_1$. Now, if neither $M_1$ nor $M_2$ is a trivial matrix, the algorithm from~\cite{HJJ} to decide whether $M_1\Rightarrow_{\lex}M_2$ is the following:
\begin{itemize}
\item[] Keep expanding the set of left columns of~$M_2$, until it is no more possible, with right columns of $n_2\times (m_1+1)$ matrices
$$\left[\begin{array}{ccc|c}f_{i_1}(x^1_{i_1 1}) & \dots & f_{i_1}(x^1_{i_1 m_1}) & f_{i_1}(y^1_{i_1}) \\ \vdots && \vdots & \vdots \\ f_{i_{n_2}}(x^1_{i_{n_2} 1}) & \dots & f_{i_{n_2}}(x^1_{i_{n_2} m_1}) & f_{i_{n_2}}(y^1_{i_{n_2}}) \end{array}\right]$$
for which each row is an interpretation of type $\{x_1,\dots,x_{k_2}\}$ of a row of~$M_1$, each of the first $m_1$ left columns is a left column of~$M_2$ but the right column is not. Then $M_1\Rightarrow_{\lex}M_2$ holds if and only if the right column of $M_2$ is contained in the left columns of the expanded matrix~$M_2$.
\end{itemize}

\subsection*{Essentially algebraic categories}

Let us conclude this preliminary section with a reminder on (many-sorted) essentially algebraic categories~\cite{AHR,AR} (or in other words locally presentable categories~\cite{GU}) as we will need them to prove Theorem~\ref{theorem general matrix}. They are described by \emph{essentially algebraic theories}, i.e., quintuples $$\Gamma=(S,\Sigma,E,\Sigma_t,\Def)$$ where
\begin{itemize}
\item $S$ is a set of sorts;
\item $\Sigma$ is an $S$-sorted signature of algebras, i.e., a set of operation symbols $\sigma$ with prescribed arity $\sigma\colon\prod_{u\in U}s_u\to s$ where $U$ is a set, $s_u\in S$ for each $u\in U$ and $s\in S$;
\item $E$ is a set of $\Sigma$-equations;
\item $\Sigma_t$ is a subset of $\Sigma$ called the set of \emph{total operation symbols};
\item $\Def$ is a function assigning to each operation symbol $\sigma\colon\prod_{u\in U}s_u\to s$ in $\Sigma\setminus\Sigma_t$ a set $\Def(\sigma)$ of $\Sigma_t$-equations $t_1=t_2$ where $t_1$ and $t_2$ are $\Sigma_t$-terms $\prod_{u\in U}s_u\to s'$.
\end{itemize}
A \emph{$\Gamma$-model} is an $S$-sorted set $A=(A_s)_{s\in S}$ together with, for each operation symbol $\sigma \colon \prod_{u\in U} s_u \to s$ in~$\Sigma$, a partial function $\sigma^A \colon \prod_{u \in U} A_{s_u} \to A_s$ such that:
\begin{enumerate}[label = \arabic*. , ref=\arabic*]
\item for each $\sigma \in \Sigma_t$, $\sigma^A$ is totally defined;
\item given $\sigma \colon \prod_{u\in U} s_u \to s$ in $\Sigma \setminus \Sigma_t$ and a family $(a_u \in A_{s_u})_{u\in U}$ of elements, $\sigma^A((a_u)_{u\in U})$ is defined if and only if the identity $$t_1^A((a_u)_{u\in U})=t_2^A((a_u)_{u\in U})$$ holds for each $\Sigma_t$-equation of~$\Def(\sigma)$;
\item $A$ satisfies the equations of $E$ wherever they are defined.
\end{enumerate}
 A $\Sigma$-term $t\colon\prod_{u\in U} s_u \to s$ will be said to be \emph{everywhere-defined} if, for each $\Gamma$-model~$A$, the induced function $t^A \colon \prod_{u \in U} A_{s_u} \to A_s$ is totally defined (see~\cite{JacqminPhD,Jacqmin1} for more details). A \emph{homomorphism} $f \colon A \to B$ of $\Gamma$-models is an $S$-sorted function $(f_s\colon A_s\to B_s)_{s\in S}$ such that, given $\sigma\colon \prod_{u \in U} s_u \to s$ in $\Sigma$ and a family $(a_u \in A_{s_u})_{u\in U}$ such that $\sigma^A((a_u)_{u\in U})$ is defined in~$A$, then $\sigma^B((f_{s_u}(a_u))_{u\in U})$ is defined in $B$ and the identity
$$f_s(\sigma^A((a_u)_{u \in U}))=\sigma^B((f_{s_u}(a_u))_{u \in U})$$
holds. The $\Gamma$-models and their homomorphisms form the category $\Mod(\Gamma)$. A category which is equivalent to a category $\Mod(\Gamma)$ for some essentially algebraic theory~$\Gamma$ is called \emph{essentially algebraic}. These are exactly the locally presentable categories. Note that essentially algebraic categories are in general not regular but have a (strong epimorphism, monomorphism)-factorization system. Each variety is a regular essentially algebraic category. The category $\Cat$ of small categories is a non-regular essentially algebraic category. We denote by $\essalg$ (respectively $\essalg_\ast$, $\regessalg$ and $\regessalg_\ast$) the collection of essentially algebraic categories (respectively of essentially algebraic pointed categories, regular essentially algebraic categories and of regular essentially algebraic pointed categories). Therefore, for extended matrices $M_1$ and~$M_2$, the notation $M_1\Rightarrow_{\regessalg}M_2$ means that any regular essentially algebraic category with $M_1$-closed relations has $M_2$-closed relations.

\section{Main results}\label{section main results}

\subsection*{Results for simple matrices}

In the case where $M_2=\Cub_{n,k}$ is the matrix from Example~\ref{example cube terms}, the algorithm for deciding $M_1\Rightarrow_{\lex} M_2$ can be nicely simplified.

\begin{proposition}\label{proposition simple matrix}
Let $M$ be a simple extended matrix (with parameters $n\geqslant 1$, $m \geqslant 0$, $m'=1$ and $k=l\geqslant 1$)
$$M=\left[ \begin{array}{ccc|c}
x_{11} & \cdots & x_{1m} & y_{1}\\
\vdots &  & \vdots & \vdots\\
x_{n1} & \cdots & x_{nm} & y_{n}\end{array} \right]$$
and let $n',k'\geqslant 2$ be integers. The implication $M\Rightarrow_{\lex} \Cub_{n',k'}$ holds if and only if there exist $i_1,\dots,i_{n'}\in\{1\dots,n\}$ such that there does not exist $j\in\{1,\dots,m\}$ for which $x_{i_aj}=y_{i_a}$ for each $a\in\{1,\dots,n'\}$.
\end{proposition}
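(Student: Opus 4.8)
The plan is to specialize the algorithm of \cite{HJJ} (recalled above) for deciding $M_1\Rightarrow_{\lex}M_2$ to the case $M_1=M$ and $M_2=\Cub_{n',k'}$, and to read off from it the stated combinatorial criterion. First I would dispose of the degenerate situations. The matrix $\Cub_{n',k'}$ is never trivial: by Theorem~\ref{theorem trivial matrices} it is enough to verify condition~(c), and since the left columns of $\Cub_{n',k'}$ are all $n'$-tuples over $\{x_1,\dots,x_{k'}\}$ except $(x_1,\dots,x_1)$, for any two rows $i,i'$ one can join, through $R_{M_i}\vee R_{M_{i'}}$ and via the all-$x_2$ column, a left column that is $x_1$ in row $i$ to one that is $x_1$ in row $i'$. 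If $M$ is trivial, then $M\Rightarrow_{\lex}\Cub_{n',k'}$ holds automatically (a preorder has $M_2$-closed relations whenever $M_2$ has at least one left column, as is the case here); and on the combinatorial side, the failure of condition~(c) for $M$ yields, taking $j=j'$, two rows $i,i'$ admitting no single column $j$ with $x_{ij}=y_i$ and $x_{i'j}=y_{i'}$, so that the tuple $(i,i',i,\dots,i)$ witnesses the stated condition. Thus both sides hold when $M$ is trivial (including the subcase $m=0$, where the condition is vacuously true), and I may henceforth assume $M$ is not trivial, so that the algorithm applies.

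The key observation is that the expansion step of the algorithm almost entirely collapses for $\Cub_{n',k'}$. Since its left columns are precisely all $n'$-tuples over $\{x_1,\dots,x_{k'}\}$ save $(x_1,\dots,x_1)$, the only column that can ever be appended is $(x_1,\dots,x_1)$, and once it is appended the set of left columns becomes the whole of $\{x_1,\dots,x_{k'}\}^{n'}$, after which no further expansion is possible. Hence $M\Rightarrow_{\lex}\Cub_{n',k'}$ holds if and only if $(x_1,\dots,x_1)$ can be realized as the right column of an admissible $n'\times(m+1)$ matrix; explicitly, if and only if there exist rows $i_1,\dots,i_{n'}\in\{1,\dots,n\}$ and functions $f_{i_a}\colon\{x_1,\dots,x_k\}\to\{x_1,\dots,x_{k'}\}$ with $f_{i_a}(y_{i_a})=x_1$ for every $a\in\{1,\dots,n'\}$, while for each $j\in\{1,\dots,m\}$ there is some $a$ with $f_{i_a}(x_{i_aj})\neq x_1$ (this last clause being exactly the requirement that each of the first $m$ columns differ from $(x_1,\dots,x_1)$, i.e.\ remain a left column).

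It then remains to match this reformulation with the combinatorial condition, which is where the actual content lies. For necessity I argue by contraposition: if, for the chosen rows, some $j$ satisfied $x_{i_aj}=y_{i_a}$ for all $a$, then $f_{i_a}(x_{i_aj})=f_{i_a}(y_{i_a})=x_1$ for all $a$, so column $j$ would equal $(x_1,\dots,x_1)$, contradicting admissibility; hence the existence of suitable $f_{i_a}$ forces the stated condition on $i_1,\dots,i_{n'}$. For sufficiency, given rows $i_1,\dots,i_{n'}$ for which no $j$ has $x_{i_aj}=y_{i_a}$ for all $a$, I define the indicator interpretations $f_{i_a}(x)=x_1$ if $x=y_{i_a}$ and $f_{i_a}(x)=x_2$ otherwise (using $k'\geq 2$); then $f_{i_a}(y_{i_a})=x_1$ by construction, and for each $j$ the hypothesis supplies some $a$ with $x_{i_aj}\neq y_{i_a}$, whence $f_{i_a}(x_{i_aj})=x_2\neq x_1$, exactly as required.

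The main obstacle is really the first genuine step: recognizing that the general, potentially lengthy, column-expansion procedure degenerates to a single admissibility question for $\Cub_{n',k'}$, precisely because its left columns exhaust all tuples but one. After that, the equivalence with the combinatorial condition is delivered cleanly by the explicit indicator interpretations. A modest amount of bookkeeping is needed to confirm that the trivial cases of $M$ are consistent with the claimed equivalence, but these present no conceptual difficulty.
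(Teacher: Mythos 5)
Your proof is correct and follows essentially the same route as the paper's: dispose of the trivial cases via Theorem~\ref{theorem trivial matrices}, observe that the expansion algorithm applied to $\Cub_{n',k'}$ collapses to the single question of whether the all-$x_1$ column can be appended, and translate that admissibility condition into the stated row condition using the indicator interpretations sending $y_{i_a}$ to $x_1$ and everything else to $x_2$. The only differences are cosmetic, e.g.\ you merge the paper's two subcases of the trivial-$M$ analysis into one by invoking reflexivity of $R_{M_i}\vee R_{M_{i'}}$ with $j=j'$.
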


\begin{proof}
Firstly, let us notice that Theorem~\ref{theorem trivial matrices} implies that $\Cub_{n',k'}$ is not a trivial matrix. Let us now assume that $M$ is trivial. In that case, we have $M\Rightarrow_{\lex}\Cub_{n',k'}$. If the left part of the $i$-th row of~$M$ (for some $i\in\{1,\dots,n\}$) does not contain $y_i$ as an entry, the second condition is satisfied with $i_1=\cdots=i_{n'}=i$. Else, by Theorem~\ref{theorem trivial matrices}, there must exist $i,i'\in\{1,\dots,n\}$ and $j,j'\in\{1,\dots,m\}$ such that $x_{ij}=y_i$, $x_{i'j'}=y_{i'}$ and $j$ is not related to $j'$ by $R_{M_i}\vee R_{M_{i'}}$. Since $n'\geqslant 2$, the second condition in the statement is then satisfied with $i_1=i$ and $i_2=\cdots=i_{n'}=i'$.

We can thus suppose without loss of generality that $M$ is not trivial. In the algorithm to decide whether $M\Rightarrow_{\lex}\Cub_{n',k'}$, there is only one column that could be added to $\Cub_{n',k'}$, i.e., the column of~$x_1$'s. This column can indeed be added if and only if we can find $i_1,\dots,i_{n'}\in\{1,\dots,n\}$ and functions $f_{i_1},\dots,f_{i_{n'}}\colon\{x_1,\dots,x_k\}\to\{x_1,\dots,x_{k'}\}$ such that the left columns of the matrix
$$\left[\begin{array}{ccc|c}f_{i_1}(x_{i_1 1}) & \dots & f_{i_1}(x_{i_1 m}) & f_{i_1}(y_{i_1}) \\ \vdots && \vdots & \vdots \\ f_{i_{n'}}(x_{i_{n'} 1}) & \dots & f_{i_{n'}}(x_{i_{n'} m}) & f_{i_{n'}}(y_{i_{n'}}) \end{array}\right]$$
are different from the $n'$-tuple $(x_1,\dots,x_1)$, but the right column is equal to it. This condition clearly implies the one in the statement. Conversely, from the condition in the statement, one can construct such a matrix by considering, for each $a\in\{1,\dots,n'\}$, the function $f_{i_a}\colon\{x_1,\dots,x_k\}\to\{x_1,\dots,x_{k'}\}$ which sends $y_{i_a}$ to $x_1$ and the other elements of the domain to~$x_2$ (using the fact that $k'\geqslant 2$).
\end{proof}

Since the above condition characterizing $M\Rightarrow_{\lex} \Cub_{n',k'}$ does not depend on~$k'$, one immediately has the following corollary.

\begin{corollary}\label{corollary cubical matrices}
For $n,k_1,k_2\geqslant 2$, one has $\Cub_{n,k_1}\Leftrightarrow_{\lex}\Cub_{n,k_2}$.
\end{corollary}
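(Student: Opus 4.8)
The plan is to read the result straight off Proposition~\ref{proposition simple matrix}. The crucial observation is that the combinatorial criterion given there for $M\Rightarrow_{\lex}\Cub_{n',k'}$---namely the existence of row indices $i_1,\dots,i_{n'}\in\{1,\dots,n\}$ for which no single left-column index $j$ satisfies $x_{i_aj}=y_{i_a}$ simultaneously for all $a\in\{1,\dots,n'\}$---is phrased entirely in terms of the matrix $M$ and the integer $n'$, with no reference whatsoever to $k'$. Consequently, for a fixed simple extended matrix $M$ and a fixed $n'\geqslant 2$, the truth value of $M\Rightarrow_{\lex}\Cub_{n',k'}$ is independent of $k'$: either $M\Rightarrow_{\lex}\Cub_{n',k'}$ holds for every $k'\geqslant 2$, or it fails for every $k'\geqslant 2$.

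Next I would instantiate this observation with $M=\Cub_{n,k_1}$ and $n'=n$. By reflexivity of the relation $\Rightarrow_{\lex}$---any finitely complete category with $\Cub_{n,k_1}$-closed relations trivially has $\Cub_{n,k_1}$-closed relations---we have $\Cub_{n,k_1}\Rightarrow_{\lex}\Cub_{n,k_1}$. Since $\Cub_{n,k_1}$ is itself a simple extended matrix with $n$ rows, the $k'$-independence just established (applied first with $k'=k_1$ and then with $k'=k_2$) upgrades this to $\Cub_{n,k_1}\Rightarrow_{\lex}\Cub_{n,k_2}$. Exchanging the roles of $k_1$ and $k_2$ yields $\Cub_{n,k_2}\Rightarrow_{\lex}\Cub_{n,k_1}$, and the conjunction of the two implications is exactly $\Cub_{n,k_1}\Leftrightarrow_{\lex}\Cub_{n,k_2}$.

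There is essentially no hard step here: the entire content is packaged into Proposition~\ref{proposition simple matrix}, and the corollary follows by pairing the $k'$-independence of its criterion with the reflexivity and symmetry built into the statement. The only point requiring a moment's care is to notice that we never need to exhibit the row indices $i_1,\dots,i_n$ explicitly; it suffices that whatever criterion governs $\Cub_{n,k_1}\Rightarrow_{\lex}\Cub_{n,k_1}$ (which we know holds) also governs $\Cub_{n,k_1}\Rightarrow_{\lex}\Cub_{n,k_2}$, precisely because that criterion cannot distinguish $k_1$ from $k_2$.
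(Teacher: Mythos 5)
Your argument is correct and matches the paper's (very terse) proof: the paper likewise derives the corollary immediately from the fact that the criterion in Proposition~\ref{proposition simple matrix} does not involve $k'$. Your use of reflexivity of $\Rightarrow_{\lex}$ to avoid exhibiting the row indices for $\Cub_{n,k_1}$ explicitly is a clean way to package the same observation.
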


Putting together Proposition~\ref{proposition simple matrix} and the results of~\cite{Oprsal}, we can easily prove the following theorem. We recall that for an extended matrix~$M$, we have defined after Theorem~\ref{theorem characterization varieties M-closed relations} the variety $\VV_M$ as the `generic' one with $M$-closed relations. If $M$ is simple, $\VV_M$ is thus obtained with a single basic operation and one axiom for each row of $M$ as described in Theorem~\ref{theorem characterization varieties simple M-closed relations}.

\begin{theorem}\label{theorem simple matrix}
Let $M$ be a simple extended matrix (with parameters $n\geqslant 1$, $m \geqslant 0$, $m'=1$ and $k=l\geqslant 1$)
$$M=\left[ \begin{array}{ccc|c}
x_{11} & \cdots & x_{1m} & y_{1}\\
\vdots &  & \vdots & \vdots\\
x_{n1} & \cdots & x_{nm} & y_{n}\end{array} \right]$$
and let $n'\geqslant 2$ be an integer. The following statements are equivalent:
\begin{enumerate}[label=(\alph*), ref=(\alph*)]
\item\label{theorem simple matrix lex} $M\Rightarrow_{\lex}\Cub_{n'}$
\item\label{theorem simple matrix pointed lex} $M\Rightarrow_{\lex_\ast}\Cub_{n'}$
\item\label{theorem simple matrix reg} $M\Rightarrow_{\reg}\Cub_{n'}$
\item\label{theorem simple matrix pointed reg} $M\Rightarrow_{\reg_\ast}\Cub_{n'}$
\item\label{theorem simple matrix alg} $M\Rightarrow_{\alg}\Cub_{n'}$
\item\label{theorem simple matrix pointed alg} $M\Rightarrow_{\alg_\ast}\Cub_{n'}$
\item\label{theorem simple matrix ess alg} $M\Rightarrow_{\essalg}\Cub_{n'}$
\item\label{theorem simple matrix pointed ess alg} $M\Rightarrow_{\essalg_\ast}\Cub_{n'}$
\item\label{theorem simple matrix reg ess alg} $M\Rightarrow_{\regessalg}\Cub_{n'}$
\item\label{theorem simple matrix pointed reg ess alg} $M\Rightarrow_{\regessalg_\ast}\Cub_{n'}$
\item\label{theorem simple matrix existence of rows} There exist $i_1,\dots,i_{n'}\in\{1,\dots,n\}$ such that there does not exist $j\in\{1,\dots,m\}$ for which $x_{i_aj}=y_{i_a}$ for each $a\in\{1,\dots,n'\}$.
\item\label{theorem simple matrix non existence of algebra} There does \emph{not} exist a function $p\colon \{0,1\}^m\to\{0,1\}$ making $(\{0,1\},p)$ an algebra of $\VV_M$ such that its induced $n'$-power $(\{0,1\}^{n'},p^{n'})$ is compatible with the $n'$-ary relation $R_{n'}=\{0,1\}^{n'}\setminus\{(0,\dots,0)\}$ (i.e., $p^{n'}(r_1,\dots,r_m)\in R_{n'}$ for each $r_1,\dots,r_m\in R_{n'}$).
\end{enumerate}
\end{theorem}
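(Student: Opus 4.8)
### Proof proposal

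The plan is to establish the long chain of equivalences by a combination of trivial implications, the already-proven Proposition~\ref{proposition simple matrix}, and a self-contained translation between the combinatorial condition~\ref{theorem simple matrix existence of rows} and the algebraic condition~\ref{theorem simple matrix non existence of algebra}. First I would fix the backbone of the argument: by Proposition~\ref{proposition simple matrix} (applied with $k'=2$, so $\Cub_{n',2}=\Cub_{n'}$), statement~\ref{theorem simple matrix lex} is \emph{equivalent} to statement~\ref{theorem simple matrix existence of rows}. This gives me a fixed point to which everything else can be compared, and it means I only need to show that each of the remaining nine implication-statements \ref{theorem simple matrix pointed lex}--\ref{theorem simple matrix pointed reg ess alg} is sandwiched between \ref{theorem simple matrix existence of rows} and \ref{theorem simple matrix lex}, together with the separate equivalence \ref{theorem simple matrix existence of rows}$\Leftrightarrow$\ref{theorem simple matrix non existence of algebra}.

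Next I would dispose of the ``easy'' implications coming purely from the inclusions of the relevant collections of categories. Since every pointed category is in particular a category, every variety is a regular (essentially algebraic) category, and every regular category is finitely complete, we have containments such as $\lex_\ast\subseteq\lex$, $\alg\subseteq\reg\subseteq\lex$, $\alg\subseteq\regessalg\subseteq\essalg$, and the pointed versions thereof. Each such containment $\C'\subseteq\C$ immediately yields $M\Rightarrow_\C\Cub_{n'}\,\Rightarrow\,M\Rightarrow_{\C'}\Cub_{n'}$, because a condition holding for all categories in a larger collection a fortiori holds for all categories in a sub-collection. In this way \ref{theorem simple matrix lex} implies all of \ref{theorem simple matrix pointed lex}--\ref{theorem simple matrix pointed reg ess alg} at once (the finitely complete statement is the strongest, as $\Cub_{n'}$ is simple and can be stated in the $\lex$ context). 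It therefore remains only to close the loop by showing that the \emph{weakest-looking} algebraic statement forces condition~\ref{theorem simple matrix existence of rows}; concretely it suffices to prove the single implication \ref{theorem simple matrix alg}$\Rightarrow$\ref{theorem simple matrix non existence of algebra}$\Rightarrow$\ref{theorem simple matrix existence of rows} (and symmetrically for the pointed algebraic statement, noting $\VV_M$ and the two-element witness can be arranged to be pointed when needed).

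The combinatorial heart is the equivalence \ref{theorem simple matrix existence of rows}$\Leftrightarrow$\ref{theorem simple matrix non existence of algebra}, which I would prove by contraposition in both directions using the canonical variety $\VV_M$ from Theorem~\ref{theorem characterization varieties simple M-closed relations}. The key observation is that a function $p\colon\{0,1\}^m\to\{0,1\}$ makes $(\{0,1\},p)$ an algebra of $\VV_M$ precisely when $p$ satisfies, for each row $i$, the identity $p(x_{i1},\dots,x_{im})=y_i$ evaluated on $\{0,1\}$; and the $n'$-power $(\{0,1\}^{n'},p^{n'})$ is compatible with $R_{n'}$ exactly when $p^{n'}$ preserves the property ``not all coordinates are $0$''. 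Unwinding the latter: compatibility \emph{fails} iff there exist $r_1,\dots,r_m\in R_{n'}$ with $p^{n'}(r_1,\dots,r_m)=(0,\dots,0)$, i.e. iff there are rows $i_1,\dots,i_{n'}$ (the coordinate projections of the $r$'s, each a nonzero tuple) for which $p$ outputs $0$ in every coordinate. I would then show that the existence of rows as in~\ref{theorem simple matrix existence of rows} is exactly the obstruction preventing any $\VV_M$-algebra structure $p$ on $\{0,1\}$ from being $R_{n'}$-compatible: if such rows $i_1,\dots,i_{n'}$ exist, no $j$ has $x_{i_aj}=y_{i_a}$ for all $a$, which is precisely the data letting one feed tuples in $R_{n'}$ to $p^{n'}$ and land in $(0,\dots,0)$, so no compatible $p$ exists; conversely, if no such rows exist, the indicator-type term forced on $\{0,1\}$ by the axioms of $\VV_M$ can be checked to be $R_{n'}$-compatible. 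This dictionary between ``rows without a common equalizing column'' and ``failure of $R_{n'}$-compatibility'' is the step I expect to be the main obstacle, since it requires carefully matching the universal quantifier over $j$ in~\ref{theorem simple matrix existence of rows} against the existential quantifier over compatibility-violating inputs in~\ref{theorem simple matrix non existence of algebra}, and checking that the two-element algebra $\VV_M$ indeed realizes the worst case. Once this is in place, \ref{theorem simple matrix alg}$\Rightarrow$\ref{theorem simple matrix non existence of algebra} is immediate: if $M\Rightarrow_{\alg}\Cub_{n'}$ held while an $R_{n'}$-compatible $\VV_M$-algebra existed on $\{0,1\}$, then $\VV_M$ would be a variety with $M$-closed relations admitting a relation (the $M$-closed, hence $\Cub_{n'}$-closed $R_{n'}$) that is \emph{not} $\Cub_{n'}$-closed, a contradiction, closing the chain back to~\ref{theorem simple matrix existence of rows}$\Leftrightarrow$\ref{theorem simple matrix lex}.
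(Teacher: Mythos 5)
Your backbone is the same as the paper's: Proposition~\ref{proposition simple matrix} with $k'=2$ gives \ref{theorem simple matrix lex}$\Leftrightarrow$\ref{theorem simple matrix existence of rows}, the inclusions between the collections give \ref{theorem simple matrix lex}$\Rightarrow$\ref{theorem simple matrix pointed lex}--\ref{theorem simple matrix pointed reg ess alg} and funnel everything down to the (pointed) varietal statements, and the loop is closed through~\ref{theorem simple matrix non existence of algebra} back to~\ref{theorem simple matrix existence of rows}. You deviate in two places, both defensibly. First, where the paper obtains \ref{theorem simple matrix alg}$\Leftrightarrow$\ref{theorem simple matrix non existence of algebra} by citing Lemma~3.5 and Proposition~7.7 of Opr\v{s}al applied to $\VV_M$, you prove only the direction \ref{theorem simple matrix alg}$\Rightarrow$\ref{theorem simple matrix non existence of algebra}, and directly: a compatible $p$ exhibits $R_{n'}$ as a homomorphic relation on a $\VV_M$-algebra that is not $\Cub_{n'}$-closed (take $f(x_1)=0$, $f(x_2)=1$). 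That is all the cycle needs and it is more self-contained. Second, where the paper passes from \ref{theorem simple matrix pointed alg} to \ref{theorem simple matrix alg} via Theorem~3.3 of~\cite{Janelidze2} (Bourn localization), you instead propose to make the varietal counterexample pointed by adjoining a constant to~$\VV_M$; this does work (interpret the constant as $1$ so that $R_{n'}$ remains a subalgebra of the power, and note that substituting the constant for all variables in the row identities collapses every closed term to it, so the enlarged variety is pointed), but you should spell that out rather than leave it parenthetical.

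The one place you are too quick is the contrapositive of \ref{theorem simple matrix non existence of algebra}$\Rightarrow$\ref{theorem simple matrix existence of rows}, i.e., constructing a compatible $p$ when no rows as in~\ref{theorem simple matrix existence of rows} exist. Your ``indicator-type term forced on $\{0,1\}$ by the axioms'' is the right object --- the paper sets $p(b_1,\dots,b_m)=0$ iff some row $i$ and some $f$ with $f(y_i)=0$ realize $(b_1,\dots,b_m)$ as $(f(x_{i1}),\dots,f(x_{im}))$ --- but you only discuss checking $R_{n'}$-compatibility, whereas the genuinely delicate point is that this $p$ is well defined as a $\VV_M$-algebra at all: one must rule out that two rows $i$, $i'$ with interpretations $f$, $g$ force conflicting values $0$ and $1$ on the same $m$-tuple. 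This is precisely where $n'\geqslant 2$ and the negation of~\ref{theorem simple matrix existence of rows} (applied with $i_1=i$ and $i_2=\cdots=i_{n'}=i'$) enter, producing a column $j$ with $x_{ij}=y_i$ and $x_{i'j}=y_{i'}$ and hence contradicting $f(x_{ij})=g(x_{i'j})$. Without that consistency check the construction is incomplete; with it, your plan coincides with the paper's proof.
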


\begin{proof}
The equivalence \ref{theorem simple matrix alg}$\Leftrightarrow$\ref{theorem simple matrix non existence of algebra} is an immediate application of Lemma~3.5 and Proposition~7.7 of~\cite{Oprsal} applied to the variety~$\VV_M$. The equivalence \ref{theorem simple matrix lex}$\Leftrightarrow$\ref{theorem simple matrix existence of rows} is Proposition~\ref{proposition simple matrix} with $k'=2$. It is trivial that \ref{theorem simple matrix lex} implies all the statements \ref{theorem simple matrix pointed lex}--\ref{theorem simple matrix pointed reg ess alg} and any of these statements implies \ref{theorem simple matrix pointed alg}. The implication \ref{theorem simple matrix pointed alg}$\Rightarrow$\ref{theorem simple matrix alg} is an immediate application of Theorem~3.3 of~\cite{Janelidze2} (see also~\cite{HJ}). It thus suffices to show the implication \ref{theorem simple matrix non existence of algebra}$\Rightarrow$\ref{theorem simple matrix existence of rows}. By contradiction, let us suppose \ref{theorem simple matrix existence of rows} does not hold and let us prove \ref{theorem simple matrix non existence of algebra} does not hold neither. We define $p\colon\{0,1\}^m\to\{0,1\}$ on an $m$-tuple $(b_1,\dots,b_m)$ by $p(b_1,\dots,b_m)=0$ if and only if there exist $i\in\{1,\dots,n\}$ and a function $f\colon\{x_1,\dots,x_k\}\to\{0,1\}$ such that $f(y_i)=0$ and $(b_1,\dots,b_m)=(f(x_{i1}),\dots,f(x_{im}))$. To prove that $(\{0,1\},p)$ indeed forms an algebra of~$\VV_M$, we need to show that for any $i\in\{1,\dots,n\}$ and any function $f\colon\{x_1,\dots,x_k\}\to\{0,1\}$, one has
$$p(f(x_{i1}),\dots,f(x_{im}))=f(y_i).$$
If $f(y_i)=0$, this is immediate from the definition of~$p$. If $f(y_i)=1$, we need to show there do not exist $i'\in\{1,\dots,n\}$ and $g\colon\{x_1,\dots,x_k\}\to\{0,1\}$ such that $g(y_{i'})=0$ and $(f(x_{i1}),\dots,f(x_{im}))=(g(x_{i'1}),\dots,g(x_{i'm}))$. But if this was the case, using that $n'\geqslant 2$ and that \ref{theorem simple matrix existence of rows} is false with $i_1=i$ and $i_2=\cdots=i_{n'}=i'$, we obtain a $j\in\{1,\dots,m\}$ such that $x_{ij}=y_i$ and $x_{i'j}=y_{i'}$, contradicting $f(x_{ij})=g(x_{i'j})$. It remains to prove that $p^{n'}$ is compatible with $R_{n'}$. The only way for it not to be so is that there exist $i_1,\dots,i_{n'}\in\{1,\dots,n\}$ and functions $f_1,\dots,f_{n'}\colon\{x_1,\dots,x_k\}\to\{0,1\}$ such that $f_a(y_{i_a})=0$ for each $a\in\{1,\dots,n'\}$ and such that the matrix
$$\left[\begin{array}{ccc}
f_1(x_{i_11}) & \cdots & f_1(x_{i_1m})\\
\vdots && \vdots\\
f_{n'}(x_{i_{n'}1}) & \cdots & f_{n'}(x_{i_{n'}m})
\end{array}\right]$$
does not contain a column of~$0$'s. But this is impossible by our assumption that \ref{theorem simple matrix existence of rows} is false.
\end{proof}

The situation described by Theorem~\ref{theorem simple matrix} is very particular to the matrices $\Cub_{n'}$. Indeed, as we have already remarked in Example~\ref{example edge terms}, one has $\Cub_3 \Rightarrow_{\alg} \Edg_3$ but $\Cub_3 \nRightarrow_{\lex} \Edg_3$. Other examples of this phenomenon are given in~\cite{HJJ}.

Since it is our most interesting case, let us specify some of the statements of Theorem~\ref{theorem simple matrix} in the case where $n'=2$, i.e., when $\Cub_{n'}=\Cub_2$ describes the Mal'tsev property.

\begin{corollary}\label{corollary simple matrix Maltsev}
Let $M$ be a simple extended matrix (with parameters $n\geqslant 1$, $m \geqslant 0$, $m'=1$ and $k=l\geqslant 1$)
$$M=\left[ \begin{array}{ccc|c}
x_{11} & \cdots & x_{1m} & y_{1}\\
\vdots &  & \vdots & \vdots\\
x_{n1} & \cdots & x_{nm} & y_{n}\end{array} \right].$$
The following statements are equivalent:
\begin{enumerate}[label=(\alph*), ref=(\alph*)]
\item Each finitely complete category with $M$-closed relations is a Mal'tsev category.
\item Each variety with $M$-closed relations is a Mal'tsev variety.
\item\label{corollary simple matrix Maltsev existence of rows} There exist $i,i'\in\{1\dots,n\}$ such that there is no $j\in\{1,\dots,m\}$ for which $x_{ij}=y_{i}$ and $x_{i'j}=y_{i'}$.
\end{enumerate}
\end{corollary}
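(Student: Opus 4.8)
The plan is to obtain this corollary as the specialization of Theorem~\ref{theorem simple matrix} to the case $n'=2$, after translating its three statements into the language of the Mal'tsev property. The key observation is that $\Cub_2$ coincides with the matrix $\Mal$ up to a permutation of its left columns and a renaming of variables (Example~\ref{example cube terms}). Since the notion of an $M$-closed relation is manifestly invariant under such operations, having $\Cub_2$-closed relations is, in every context, exactly the same condition as having $\Mal$-closed relations.

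First I would identify statement~(a) with $M\Rightarrow_{\lex}\Cub_2$. By Example~\ref{example Mal'tsev}, a finitely complete category is a Mal'tsev category if and only if it has $\Mal$-closed relations, hence if and only if it has $\Cub_2$-closed relations by the remark above; thus statement~(a) asserts precisely that every finitely complete category with $M$-closed relations has $\Cub_2$-closed relations, which is the statement \ref{theorem simple matrix lex} of Theorem~\ref{theorem simple matrix} for $n'=2$.

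Next I would identify statement~(b) with $M\Rightarrow_{\alg}\Cub_2$. Again by Example~\ref{example Mal'tsev}, a variety is a Mal'tsev variety if and only if it has $\Mal$-closed relations, hence if and only if it has $\Cub_2$-closed relations; so statement~(b) is exactly the statement \ref{theorem simple matrix alg} of Theorem~\ref{theorem simple matrix} for $n'=2$. Finally, statement~(c) is visibly the specialization of condition \ref{theorem simple matrix existence of rows} to $n'=2$: writing $i_1=i$ and $i_2=i'$, the requirement that there be no $j$ with $x_{i_a j}=y_{i_a}$ for each $a\in\{1,2\}$ is precisely the requirement that there be no $j$ with $x_{ij}=y_i$ and $x_{i'j}=y_{i'}$.

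With these three translations in hand, the corollary follows immediately from the equivalence of \ref{theorem simple matrix lex}, \ref{theorem simple matrix alg} and \ref{theorem simple matrix existence of rows} in Theorem~\ref{theorem simple matrix} applied with $n'=2$. There is no genuine obstacle here: the mathematical content is entirely contained in Theorem~\ref{theorem simple matrix}, and the only point requiring care is to spell out the invariance of the matrix condition under permutation of left columns and change of variables, which guarantees that $\Cub_2$-closedness and $\Mal$-closedness coincide in both the finitely complete and the varietal settings.
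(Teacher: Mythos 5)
Your proposal is correct and is exactly the paper's intended argument: the corollary is stated as an immediate specialization of Theorem~\ref{theorem simple matrix} to $n'=2$, using the identification of $\Cub_2$ with $\Mal$ up to permutation of left columns and change of variables from Example~\ref{example cube terms}. Your translation of (a), (b), (c) into statements \ref{theorem simple matrix lex}, \ref{theorem simple matrix alg} and \ref{theorem simple matrix existence of rows} is precisely what the paper leaves implicit.
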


\begin{remark}
It is shown in~\cite{HJJW} that if $M$ is a simple extended matrix as in Corollary~\ref{corollary simple matrix Maltsev} with $k=2$, then the equivalent conditions in that corollary are further equivalent to:
\begin{enumerate}[label=(\alph*), ref=(\alph*), resume]
\item There is a finitely complete majority category which does not have $M$-closed relations.
\end{enumerate}
In other words, if $k=2$, $M\Rightarrow_{\lex}\Mal$ if and only if $\Maj\nRightarrow_{\lex}M$. It is also shown that if $k>2$, this equivalence is no longer true.
\end{remark}

Statement~\ref{theorem simple matrix existence of rows} of Theorem~\ref{theorem simple matrix} provides an algorithm to decide whether $M \Rightarrow_{\alg} \Cub_{n'}$ (or equivalently $M \Rightarrow_{\lex} \Cub_{n'}$). It is easy to see that this algorithm requires at most $m\times n^{n'}$ comparisons of columns, and thus at most $n'\times m \times n^{n'}$ comparisons of elements. For a fixed~$n'$, this is thus a polynomial-time algorithm in the parameters $n$ and $m$ of the input matrix~$M$. For $n'=2$, statement~\ref{corollary simple matrix Maltsev existence of rows} of Corollary~\ref{corollary simple matrix Maltsev} thus provides an algorithm to decide whether $M \Rightarrow_{\alg} \Mal$ (or equivalently $M \Rightarrow_{\lex} \Mal$) which requires at most $2mn^2$ comparisons of elements.

Theorem~3.6 in~\cite{HJJ} states that the intersection of finitely many matrix properties induced by simple extended matrices is again a matrix property induced by a simple extended matrix. Given, for each $i\in\{1,2\}$, such a matrix $M_i$ with parameters $n_i$, $m_i$, $m'_i=1$ and $k_i=l_i$, we can form a matrix $M$ with parameters $n=n_1+n_2$, $m=m_1\times m_2$, $m'=1$ and $k=l=\max(k_1,k_2)$ as follows: the left columns of $M$ are indexed by the pairs consisting of a left column of $M_1$ and a left column of $M_2$ and are obtained by superposing this column of $M_1$ over this column of~$M_2$. The right column of $M$ is obtained by superposing the right column of $M_1$ over the right column of~$M_2$. Then, a finitely complete category has $M$-closed relations if and only if it has $M_1$-closed relations and $M_2$-closed relations.

\begin{theorem}\label{theorem intersection}
Let $d\geqslant 0$ be an integer and $(M_i)_{i\in\{1,\dots,d\}}$ be a finite family of simple extended matrices. For $n'\geqslant 2$, the following statements are equivalent:
\begin{enumerate}[label=(\alph*), ref=(\alph*)]
\item\label{theorem intersection all} Each finitely complete category with $M_i$-closed relations for all $i\in\{1,\dots,d\}$ has $\Cub_{n'}$-closed relations.
\item\label{theorem intersection one} There exists $i\in\{1,\dots,d\}$ such that $M_i\Rightarrow_{\lex}\Cub_{n'}$.
\end{enumerate}
\end{theorem}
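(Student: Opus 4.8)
The plan is to establish both implications, the direction $\ref{theorem intersection one}\Rightarrow\ref{theorem intersection all}$ being immediate: if $M_{i^*}\Rightarrow_{\lex}\Cub_{n'}$ for some $i^*$, then any finitely complete category with $M_i$-closed relations for every $i$ has, in particular, $M_{i^*}$-closed relations, hence $\Cub_{n'}$-closed relations. The degenerate case $d=0$ I would dispose of separately: the empty family imposes no constraint, yet $\Set\in\lex$ has no $n'$-cube term and so fails $\Cub_{n'}$-closedness, making \ref{theorem intersection all} false, while \ref{theorem intersection one} is an empty disjunction and hence also false. For the substantial direction with $d\geqslant 1$, I would first collapse the conjunction to a single matrix: iterating the superposition construction recalled just before the theorem (Theorem~3.6 of~\cite{HJJ}) produces a simple extended matrix $M$ whose $M$-closedness is equivalent to $M_i$-closedness for all $i$. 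Explicitly, a row of $M$ is a pair $(i,r)$ with $r\in\{1,\dots,n_i\}$, a left column of $M$ is a $d$-tuple $(j_1,\dots,j_d)$ of left columns of the $M_i$'s, and the entry of $M$ in row $(i,r)$ and column $(j_1,\dots,j_d)$ is the entry $x^{(i)}_{rj_i}$ of $M_i$, while its right entry in row $(i,r)$ is $y^{(i)}_r$. Thus \ref{theorem intersection all} is precisely $M\Rightarrow_{\lex}\Cub_{n'}$.

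I would then apply Theorem~\ref{theorem simple matrix} to $M$ and to each $M_i$, which converts every implication of the form $N\Rightarrow_{\lex}\Cub_{n'}$ into its combinatorial shape~\ref{theorem simple matrix existence of rows}. What remains is the elementary claim that \ref{theorem simple matrix existence of rows} holds for $M$ if and only if it holds for some $M_{i^*}$. The key point I would exploit is that the entry $x^{(i)}_{rj_i}$ of row $(i,r)$ depends on the chosen column only through its $i$-th coordinate. Consequently, given chosen rows $(i_1,r_1),\dots,(i_{n'},r_{n'})$ of $M$ and setting $A_i=\{a\mid i_a=i\}$, a column $(j_1,\dots,j_d)$ realizes $x_{(i_a,r_a),(j_1,\dots,j_d)}=y_{(i_a,r_a)}$ for all $a$ exactly when, for each $i$, its coordinate $j_i$ satisfies $x^{(i)}_{r_aj_i}=y^{(i)}_{r_a}$ for all $a\in A_i$. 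Since the coordinates vary independently, such a column exists if and only if each $M_i$ admits a suitable $j_i$; equivalently, no such column exists for $M$ if and only if some index $i^*$ admits no column of $M_{i^*}$ meeting the constraints indexed by $A_{i^*}$.

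This decoupling would give both directions. From \ref{theorem simple matrix existence of rows} for some $M_{i^*}$, witnessed by rows $s_1,\dots,s_{n'}$, I would place all $n'$ chosen rows of $M$ in the $i^*$-th slot, namely $(i^*,s_1),\dots,(i^*,s_{n'})$; then $A_{i^*}=\{1,\dots,n'\}$ and the absence of a good column for $M_{i^*}$ becomes the absence of a good column for $M$. Conversely, from \ref{theorem simple matrix existence of rows} for $M$ the decoupling furnishes an index $i^*$ admitting no column compatible with $A_{i^*}$; if $A_{i^*}\neq\varnothing$ the rows $\{r_a\mid a\in A_{i^*}\}$ already block every column of $M_{i^*}$, and I would pad them by repetition up to $n'$ rows (legitimate since the indices in~\ref{theorem simple matrix existence of rows} need not be distinct), leaving the constraint set unchanged; if $A_{i^*}=\varnothing$ the blocked constraint is vacuous, which forces $m_{i^*}=0$, so $M_{i^*}$ has no columns and \ref{theorem simple matrix existence of rows} holds trivially. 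Reading Theorem~\ref{theorem simple matrix} backwards then yields $M_{i^*}\Rightarrow_{\lex}\Cub_{n'}$, i.e.\ \ref{theorem intersection one}.

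The hard part will not be any single step but the combined-matrix bookkeeping that underlies the whole argument: recognizing that a left column of $M$ is a tuple of columns and that the entry of a given row sees only one of its coordinates, so that the existence of a ``good'' column factorizes over the $M_i$'s. The two points I would treat most carefully are the padding step --- needed because \ref{theorem simple matrix existence of rows} demands exactly $n'$ rows whereas a witness for $M$ may draw fewer rows from $M_{i^*}$ --- and the degenerate subcase $A_{i^*}=\varnothing$, which can only occur when $M_{i^*}$ is column-free and is then absorbed by triviality.
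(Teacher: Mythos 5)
Your proof is correct and follows essentially the same route as the paper's: pass to the combined matrix $M$, translate both sides through condition~\ref{theorem simple matrix existence of rows} of Theorem~\ref{theorem simple matrix}, and observe that a ``good'' left column of $M$ decomposes coordinatewise over the $M_i$'s, so that none exists precisely when some factor admits none. The only cosmetic differences are that the paper reduces to $d=2$ by induction rather than treating general $d$ directly, pads the witness rows with the first row of $M_{i^*}$ instead of by repetition, and leaves the edge cases ($A_{i^*}=\varnothing$, $m_{i^*}=0$) implicit where you spell them out.
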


\begin{proof}
The statement being trivial for $d=0$ and $d=1$, let us assume without loss of generality that $d\geqslant 2$. Furthermore, since the intersection of finitely many matrix properties induced by simple extended matrices is again a matrix property induced by a simple extended matrix, using induction, we can assume without loss generality that $d=2$. The implication \ref{theorem intersection one}$\Rightarrow$\ref{theorem intersection all} being trivial, we assume \ref{theorem intersection all} and we shall prove~\ref{theorem intersection one}. Let $M$ be the simple extended matrix as constructed above from $M_1$ and~$M_2$. We shall use the same notation as above for the parameters of $M_1$, $M_2$ and~$M$. Moreover, for $i\in\{1,2\}$, we denote the entries of $M_i$ as
$$M_i=\left[ \begin{array}{ccc|c}
x^i_{11} & \cdots & x^i_{1m_i} & y^i_{1}\\
\vdots &  & \vdots & \vdots\\
x^i_{n_i1} & \cdots & x^i_{n_im_i} & y^i_{n_i}\end{array} \right]$$
and we denote the entries of $M$ as
$$M=\left[ \begin{array}{ccc|c}
x_{11} & \cdots & x_{1m} & y_{1}\\
\vdots &  & \vdots & \vdots\\
x_{n1} & \cdots & x_{nm} & y_{n}\end{array} \right].$$
We thus assume that $M\Rightarrow_{\lex}\Cub_{n'}$ and we shall prove that either $M_1\Rightarrow_{\lex}\Cub_{n'}$ or $M_2\Rightarrow_{\lex}\Cub_{n'}$. By Theorem~\ref{theorem simple matrix}, we know that there exist $i_1,\dots,i_{n'}\in\{1,\dots,n_1+n_2\}$ such that there does not exist $j\in\{1,\dots,m_1\times m_2\}$ for which $x_{i_aj}=y_{i_a}$ for each $a\in\{1,\dots,n'\}$. Let us denote by $S_1$ the set $S_1=\{a\in\{1,\dots,n'\}\,|\,i_a\in\{1,\dots,n_1\}\}$ and by $S_2$ the set $S_2=\{1,\dots,n'\}\setminus S_1$. If there exist $j_1\in\{1,\dots,m_1\}$ and $j_2\in\{1,\dots,m_2\}$ such that $x^1_{i_aj_1}=y^1_{i_a}$ for each $a\in S_1$ and $x^2_{(i_a-n_1)j_2}=y^2_{i_a-n_1}$ for each $a\in S_2$, by choosing $j\in\{1,\dots,m_1\times m_2\}$ as the index of the left column of $M$ obtained by superposing the $j_1$-th left column of $M_1$ over the $j_2$-th left column of~$M_2$, one obtains that $x_{i_aj}=y_{i_a}$ for each $a\in\{1,\dots,n'\}$, contradicting our hypothesis. By symmetry, we can therefore assume without loss of generality that there does not exist $j_1\in\{1,\dots,m_1\}$ for which $x^1_{i_aj_1}=y^1_{i_a}$ for each $a\in S_1$. Let us define $i'_a\in\{1,\dots,n_1\}$ for each $a\in\{1,\dots,n'\}$ by
$$i'_a=\begin{cases}i_a & \quad\text{if }a\in S_1\\ 1 & \quad\text{if }a\in S_2.\end{cases}$$
The indices $i'_1,\dots,i'_{n'}\in\{1,\dots,n_1\}$ satisfy condition~\ref{theorem simple matrix existence of rows} of Theorem~\ref{theorem simple matrix} and therefore one has $M_1\Rightarrow_{\lex}\Cub_{n'}$.
\end{proof}

\subsection*{Results for general matrices}

We now tackle the question to describe when $M\Rightarrow_{\reg}\Cub_n$ for a (not necessarily simple) extended matrix~$M$. In general, we do not yet know an algorithm to decide whether a general matrix condition implies another one. In the following theorem, we thus have to use another technique than the one used to prove Theorem~\ref{theorem simple matrix}. We will use here, in addition to the results of~\cite{Oprsal}, the embedding theorems of~\cite{Jacqmin3}. In order to do so, we will need the axiom of universes~\cite{AGV}, which will only be used to prove the implication \ref{theorem general matrix reg ess alg}$\Rightarrow$\ref{theorem general matrix reg} (the equivalences \ref{theorem general matrix reg ess alg}$\Leftrightarrow$\ref{theorem general matrix alg}$\Leftrightarrow$\ref{theorem general matrix non existence of algebra} do not rely on the axiom of universes).

\begin{theorem}\label{theorem general matrix}
Let $M$ be an extended matrix (with parameters $n\geqslant 1$, $m\geqslant 0$, $m'\geqslant 0$ and $k\geqslant l \geqslant 0$)
$$M=\left[ \begin{array}{ccc|ccc}
x_{11} & \cdots & x_{1m} & y_{11} & \cdots & y_{1m'}\\
\vdots &  & \vdots & \vdots && \vdots\\
x_{n1} & \cdots & x_{nm} & y_{n1} & \cdots & y_{nm'} \end{array} \right]$$
and let $n'\geqslant 2$ be an integer. The following statements are equivalent:
\begin{enumerate}[label=(\alph*), ref=(\alph*)]
\item\label{theorem general matrix reg} $M\Rightarrow_{\reg}\Cub_{n'}$
\item\label{theorem general matrix reg ess alg} $M\Rightarrow_{\regessalg}\Cub_{n'}$
\item\label{theorem general matrix alg} $M\Rightarrow_{\alg}\Cub_{n'}$
\item\label{theorem general matrix non existence of algebra} There do \emph{not} exist functions $$p_1,\dots,p_{m'}\colon \{0,1\}^m\to\{0,1\}$$ and $$q_1,\dots,q_{k-l}\colon\{0,1\}^l\to\{0,1\}$$ making $A=(\{0,1\},p_1,\dots,p_{m'},q_1,\dots,q_{k-l})$ an algebra of $\VV_M$ such that the $n'$-ary relation $R_{n'}=\{0,1\}^{n'}\setminus\{(0,\dots,0)\}$ is a homomorphic relation on~$A$.
\end{enumerate}
\end{theorem}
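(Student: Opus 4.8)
The plan is to establish the implications \ref{theorem general matrix reg}$\Rightarrow$\ref{theorem general matrix reg ess alg}$\Rightarrow$\ref{theorem general matrix alg}, the equivalence \ref{theorem general matrix alg}$\Leftrightarrow$\ref{theorem general matrix non existence of algebra}, and finally \ref{theorem general matrix reg ess alg}$\Rightarrow$\ref{theorem general matrix reg} together with \ref{theorem general matrix non existence of algebra}$\Rightarrow$\ref{theorem general matrix reg ess alg}; these close the loop and yield all four equivalences. The first two implications are free: checking a matrix implication over a larger collection is a stronger demand, and from the inclusions $\alg\subseteq\regessalg\subseteq\reg$ one reads off \ref{theorem general matrix reg}$\Rightarrow$\ref{theorem general matrix reg ess alg}$\Rightarrow$\ref{theorem general matrix alg} at once. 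None of the genuinely new work lives here; it is concentrated in the step \ref{theorem general matrix non existence of algebra}$\Rightarrow$\ref{theorem general matrix reg ess alg}.

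For \ref{theorem general matrix alg}$\Leftrightarrow$\ref{theorem general matrix non existence of algebra} I would argue exactly as in the simple case (Theorem~\ref{theorem simple matrix}). By the universal property of $\VV_M$ together with Theorem~\ref{theorem characterization varieties M-closed relations}, every variety with $M$-closed relations receives an interpretation of $\VV_M$, and an $n'$-cube term is carried along any such interpretation; hence $M\Rightarrow_\alg\Cub_{n'}$ holds if and only if the generic variety $\VV_M$ itself has an $n'$-cube term. Lemma~3.5 and Proposition~7.7 of~\cite{Oprsal}, applied to $\VV_M$, then convert the existence of such a term into the non-existence of the two-element blocker of~\ref{theorem general matrix non existence of algebra}. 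The implication \ref{theorem general matrix reg ess alg}$\Rightarrow$\ref{theorem general matrix reg} is, on the other hand, an instance of the general embedding theorems of~\cite{Jacqmin3}, which (granting the axiom of universes~\cite{AGV}) identify $\Rightarrow_\reg$ with $\Rightarrow_{\regessalg}$ for any pair of extended matrices; this is the only point at which the axiom of universes is needed.

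The hard part will be \ref{theorem general matrix non existence of algebra}$\Rightarrow$\ref{theorem general matrix reg ess alg}, i.e.\ the collapse of the essentially algebraic context onto the varietal one for the target $\Cub_{n'}$ --- precisely the phenomenon that fails for general targets such as $\Edg_n$. I would proceed by contraposition: starting from a regular essentially algebraic category $\CC$ with $M$-closed relations carrying an $n'$-ary relation $r\colon R\rightarrowtail A^{n'}$ that is not $\Cub_{n'}$-closed, I aim to manufacture the forbidden two-element $\VV_M$-algebra. The failure of $\Cub_{n'}$-closedness yields generalized elements $a,b\colon B\to A$ interpreting $x_1,x_2$ for which each of the $2^{n'}-1$ binary $n'$-tuples other than the constant $x_1$-tuple factors through $R$, while the constant $x_1$-tuple (the diagonal) does not; since $\Cub_{n'}$ uses only the variables $x_1,x_2$, this configuration is genuinely two-valued. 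I would then define a $\VV_M$-structure on $\{0,1\}$ --- identifying $0$ with the value of $x_1$ and $1$ with that of $x_2$ --- by letting the operations $p_1,\dots,p_{m'},q_1,\dots,q_{k-l}$ act through the $M$-closedness of $R$ (and of the auxiliary relations built from it), and the compatibility of $R_{n'}$ with this structure would be forced by the observation that the left columns of $\Cub_{n'}$, read in $\{0,1\}$, are exactly the elements of $R_{n'}$ whereas the right column is $(0,\dots,0)\notin R_{n'}$.

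The technical heart --- and the reason the varietal context suffices here but not for other matrices --- is that Opr\v{s}al's obstruction is a single two-element algebra, so a counterexample, however large and non-varietal its ambient category $\CC$, can be compressed onto two points living in a variety. Making this compression precise, namely checking that $M$-closedness really does endow $\{0,1\}$ with well-defined operations satisfying the axioms of $\VV_M$ and that these preserve $R_{n'}$, is the step I expect to require the most care.
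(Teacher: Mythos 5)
Your overall architecture coincides with the paper's: the trivial chain \ref{theorem general matrix reg}$\Rightarrow$\ref{theorem general matrix reg ess alg}$\Rightarrow$\ref{theorem general matrix alg}, the equivalence \ref{theorem general matrix alg}$\Leftrightarrow$\ref{theorem general matrix non existence of algebra} via Opr\v{s}al applied to $\VV_M$, the implication \ref{theorem general matrix reg ess alg}$\Rightarrow$\ref{theorem general matrix reg} from the embedding theorems of~\cite{Jacqmin3} under the axiom of universes, and the concentration of all real work in \ref{theorem general matrix non existence of algebra}$\Rightarrow$\ref{theorem general matrix reg ess alg}. Your two-point compression idea for that last step is also the right one. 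But there is a genuine gap exactly where you flag the difficulty: you never say how the operations $p_1,\dots,p_{m'},q_1,\dots,q_{k-l}$ on $\{0,1\}$ are to be obtained. ``Letting the operations act through the $M$-closedness of $R$ (and of the auxiliary relations built from it)'' does not define anything: $M$-closedness is a property of relations --- the existence of a factorization through an image --- and by itself it supplies no operations whatsoever. The missing ingredient is the syntactic characterization of $M$-closedness for essentially algebraic categories, namely Theorem~3.3 of~\cite{Jacqmin3}, which converts the hypothesis that $\Mod(\Gamma)$ has $M$-closed relations into actual $\Sigma$-terms $p^s_j\colon s^m\to s$ and $q^s_a\colon s^l\to s$ (one family per sort) satisfying partial-definedness versions of the $\VV_M$ axioms. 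Only with these terms in hand can one induce total operations on $\{0,1\}$, and even then one must decide what to do when a term is undefined at a tuple of $\{b_0,b_1\}$ or takes a value outside $\{b_0,b_1\}$; the paper's convention (set the value to $1$ unless the term is defined and equal to $b_0$) is precisely what makes the verification that $(\{0,1\},p_1,\dots,q_{k-l})$ is a $\VV_M$-algebra and the transfer back to the relation $T$ go through. Your sketch addresses neither the source of the terms nor the partiality.

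A secondary point: your contrapositive framing (start from a non-$\Cub_{n'}$-closed relation, build the forbidden algebra, verify that $R_{n'}$ is homomorphic on it) is logically equivalent to the paper's direct argument (for any candidate failure, build the algebra, invoke \ref{theorem general matrix non existence of algebra} to get an operation collapsing some tuples of $R_{n'}$ to $(0,\dots,0)$, and translate that back into $(b_0,\dots,b_0)\in T_s$), and once the terms are available either direction works. So this is a stylistic difference, not an error. The gap is solely the absence of a mechanism --- in practice, Theorem~3.3 of~\cite{Jacqmin3} or a proof of an equivalent ``approximate Mal'tsev condition'' for essentially algebraic theories --- producing the operations that your two-element algebra is supposed to carry.
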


\begin{proof}
The equivalence \ref{theorem general matrix alg}$\Leftrightarrow$\ref{theorem general matrix non existence of algebra} is an immediate application of Lemma~3.5 and Proposition~7.7 of~\cite{Oprsal} applied to the variety~$\VV_M$. Under the axiom of universes, the implication \ref{theorem general matrix reg ess alg}$\Rightarrow$\ref{theorem general matrix reg} follows immediately from Theorem~4.6 of~\cite{Jacqmin3}. The implications \ref{theorem general matrix reg}$\Rightarrow$\ref{theorem general matrix reg ess alg}$\Rightarrow$\ref{theorem general matrix alg} being trivial, it remains to prove \ref{theorem general matrix non existence of algebra}$\Rightarrow$\ref{theorem general matrix reg ess alg}. Let us thus assume that \ref{theorem general matrix non existence of algebra} holds and let us consider an essentially algebraic theory $\Gamma=(S,\Sigma,E,\Sigma_t,\Def)$ such that $\Mod(\Gamma)$ is a regular category with $M$-closed relations. We shall prove that it has $\Cub_{n'}$-closed relations. By Theorem~3.3 in~\cite{Jacqmin3}, we know that, for each sort $s\in S$, there exist in~$\Gamma$:
\begin{itemize}
\item for each $j\in\{1,\dots,m'\}$, a term $p^s_j\colon s^m\to s$,
\item for each $a\in\{1,\dots,k-l\}$, a term $q^s_a\colon s^l\to s$
\end{itemize}
such that, for each $i\in\{1,\dots,n\}$ and each $j\in\{1,\dots,m'\}$,
\begin{itemize}
\item if $y_{ij}=x_a\in\{x_1,\dots,x_l\}$, the term $p^s_j(x_{i1},\dots,x_{im})\colon s^l\to s$ (in the variables $x_1,\dots,x_l$) is everywhere-defined and equal to~$x_a$,
\item if $y_{ij}=x_a\in\{x_{l+1},\dots,x_k\}$, the term $p^s_j(x_{i1},\dots,x_{im})\colon s^l\to s$ (in the variables $x_1,\dots,x_l$) is defined for an $l$-tuple $(b_1,\dots,b_l)\in B_s^l$ in a $\Gamma$-model~$B$ if and only if $q^s_{a-l}(b_1,\dots,b_l)$ is defined, and in that case they are equal.
\end{itemize}
Let now $T$ be a homomorphic $n'$-ary relation on the $\Gamma$-model~$B$. Let $s\in S$ and let $b_0,b_1\in B_s$ be such that any $n'$-tuple of elements of $\{b_0,b_1\}$, except maybe $(b_0,\dots,b_0)$, is in~$T_s$. We shall prove that $(b_0,\dots,b_0)\in T_s$. For each $j\in\{1,\dots,m\}$, let us define the function $p_j\colon \{0,1\}^m\to \{0,1\}$ on $(c_1,\dots,c_m)\in\{0,1\}^m$ by $p_j(c_1,\dots,c_m)=0$ if $p^s_j(b_{c_1},\dots,b_{c_m})$ is defined and equal to~$b_0$, otherwise we set $p_j(c_1,\dots,c_m)=1$. Similarly, for each $a\in\{1,\dots,k-l\}$, let us define the function $q_a\colon \{0,1\}^l\to \{0,1\}$ on $(c_1,\dots,c_l)\in\{0,1\}^l$ by $q_a(c_1,\dots,c_l)=0$ if $q^s_a(b_{c_1},\dots,b_{c_m})$ is defined and equal to~$b_0$, otherwise we set $q_a(c_1,\dots,c_m)=1$. It is easy to see that $A=(\{0,1\},p_1,\dots,p_{m'},q_1,\dots,q_{k-l})$ forms an algebra of~$\VV_M$. Let us consider the bijection $f\colon\{0,1\}\to\{b_0,b_1\}$ defined by $f(0)=b_0$ and $f(1)=b_1$ and the induced bijection $f^{n'}\colon\{0,1\}^{n'}\to \{b_0,b_1\}^{n'}$. Since \ref{theorem general matrix non existence of algebra} holds, either there exist $j\in\{1,\dots,m'\}$ and elements $r_1,\dots,r_m\in R_{n'}$ such that $p^{n'}_j(r_1,\dots,r_m)=(0,\dots,0)$ or there exist $a\in\{1,\dots,k-l\}$ and elements $r_1,\dots,r_l\in R_{n'}$ such that $q^{n'}_a(r_1,\dots,r_l)=(0,\dots,0)$ (where $p^{n'}_j$ and $q^{n'}_a$ are the operations induced on $\{0,1\}^{n'}$ by $p_j$ and $q_a$ respectively). We thus have either that $(p^s_j)^{n'}(f(r_1),\dots,f(r_m))$ is defined and equal to $(b_0,\dots,b_0)$, or that $(q^s_a)^{n'}(f(r_1),\dots,f(r_l))$ is defined and equal to $(b_0,\dots,b_0)$. Since $T$ is a homomorphic relation on $B$ and either $f(r_1),\dots,f(r_m)\in T_s$ or $f(r_1),\dots,f(r_l)\in T_s$, we can conclude in both cases that $(b_0,\dots,b_0)\in T_s$.
\end{proof}

Let us notice that condition~\ref{theorem general matrix non existence of algebra} provides a (finite time) algorithm to decide whether $M\Rightarrow_{\reg}\Cub_{n'}$ (or equivalently $M\Rightarrow_{\alg}\Cub_{n'}$), but it seems this is not a polynomial-time algorithm.

Again, since this is our most interesting case, let us emphasize the case $n'=2$ of Theorem~\ref{theorem general matrix}.

\begin{corollary}
For an extended matrix~$M$, the following statements are equivalent:
\begin{enumerate}[label=(\alph*), ref=(\alph*)]
\item Every regular category with $M$-closed relations is a Mal'tsev category.
\item Every variety with $M$-closed relations is a Mal'tsev variety.
\end{enumerate}
\end{corollary}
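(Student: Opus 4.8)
The plan is to derive this corollary directly from Theorem~\ref{theorem general matrix} in the case $n'=2$. First I would recall from Example~\ref{example cube terms} that $\Cub_2\Leftrightarrow_{\lex}\Mal$, since $\Cub_2$ is, up to a permutation of its left columns and a change of variables, the Mal'tsev matrix $\Mal$ of Example~\ref{example Mal'tsev}. Because the collections $\reg$ and $\alg$ are both sub-collections of $\lex$, this finitely complete equivalence restricts to give $\Cub_2\Leftrightarrow_{\reg}\Mal$ and $\Cub_2\Leftrightarrow_{\alg}\Mal$ as well; in particular, a finitely complete category is a Mal'tsev category precisely when it has $\Cub_2$-closed relations, and the same holds for a variety.

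Using this dictionary, I would rewrite the two statements of the corollary in terms of $\Cub_2$. Statement~(a), that every regular category with $M$-closed relations is a Mal'tsev category, is exactly $M\Rightarrow_{\reg}\Cub_2$, which is condition~\ref{theorem general matrix reg} of Theorem~\ref{theorem general matrix} taken at $n'=2$. Likewise, statement~(b), that every variety with $M$-closed relations is a Mal'tsev variety, is exactly $M\Rightarrow_{\alg}\Cub_2$, which is condition~\ref{theorem general matrix alg} of the same theorem at $n'=2$. It then remains only to invoke the equivalence \ref{theorem general matrix reg}$\Leftrightarrow$\ref{theorem general matrix alg} established in Theorem~\ref{theorem general matrix}, which yields the desired equivalence of~(a) and~(b).

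I do not anticipate any genuine obstacle here, as the corollary is essentially an immediate specialization. The only point requiring a moment's care is checking that the equivalence $\Cub_2\Leftrightarrow_{\lex}\Mal$ indeed descends to the regular and varietal contexts; but this is immediate from the inclusions $\reg\subseteq\lex$ and $\alg\subseteq\lex$, together with the fact that, for a simple matrix such as $\Mal$ or $\Cub_2$, the notion of $M$-closed relation in a regular category or a variety coincides with its finitely complete interpretation.
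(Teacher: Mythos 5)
Your proposal is correct and matches the paper's (implicit) argument exactly: the corollary is stated as the case $n'=2$ of Theorem~\ref{theorem general matrix}, combined with the identification $\Cub_2\Leftrightarrow_{\lex}\Mal$ from Example~\ref{example cube terms}. Your added remark that the regular and finitely complete notions of $M$-closedness agree for the simple matrix $\Cub_2$ (so the equivalence descends to $\reg$ and $\alg$) is a worthwhile point of care that the paper leaves tacit.
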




\vspace{5mm}
\noindent
Michael Hoefnagel\\
Mathematics Division\\
Department of Mathematical Sciences\\
Stellenbosch University\\
Private Bag X1 Matieland 7602\\
South Africa
\vspace{3mm}

\noindent
National Institute for Theoretical and Computational Sciences (NITheCS)\\
South Africa
\vspace{3mm}

\noindent
mhoefnagel@sun.ac.za\\
\vspace{10mm}

\noindent
Pierre-Alain Jacqmin\\
Institut de Recherche en Math\'ematique et Physique\\
Universit\'e catholique de Louvain\\
Chemin du Cyclotron~2\\
B 1348 Louvain-la-Neuve\\
Belgium
\vspace{3mm}

\noindent
Centre for Experimental Mathematics\\
Department of Mathematical Sciences\\
Stellenbosch University\\
Private Bag X1 Matieland 7602\\
South Africa
\vspace{3mm}

\noindent
pierre-alain.jacqmin@uclouvain.be

\end{document}